\documentclass[11pt]{amsart}

\usepackage{graphicx}
\usepackage{mathptmx}

\usepackage{amscd}
\usepackage{amsthm}
\usepackage{amsxtra}
\usepackage{a4wide}
\usepackage{latexsym}
\usepackage{amssymb}
\usepackage{amsfonts}
\usepackage{amsmath}
\usepackage{amsrefs}
\usepackage{mathrsfs}

\usepackage{upref}
\usepackage{txfonts}

\usepackage{todonotes}


\usepackage[bookmarksnumbered, colorlinks, plainpages]{hyperref}
\hypersetup{colorlinks=true,linkcolor=red, anchorcolor=green, citecolor=cyan, urlcolor=red, filecolor=magenta, pdftoolbar=true}

\allowdisplaybreaks


\theoremstyle{plain}
\newtheorem{thm}{Theorem}[section]
\newtheorem{lem}[thm]{Lemma}

\newtheorem{prop}[thm]{Proposition}
\theoremstyle{definition}
\newtheorem{rem}[thm]{Remark}

\newtheorem{defi}[thm]{Definition}

\newtheorem{conv}[thm]{Convention}

\numberwithin{thm}{section}
\numberwithin{equation}{section}


\def\supp{\operatorname{supp}}
\def\rhs{\operatorname{RHS}}
\def\lhs{\operatorname{LHS}}

\def\esup{\operatornamewithlimits{ess\,sup}}

\def\RHS{\operatorname{RHS}}
\def\LHS{\operatorname{LHS}}

\def\R{\mathbb R}
\def\Z{\mathbb Z}

\def\ap{\approx}

\def\qq{\qquad}
\def\rn{\R^n}

\def\O{\Omega}

\def\la{\lambda}

\def\vp{\varphi}

\def\t{\theta}
\def\I{(0,\infty)}

\def\rw{\rightarrow}

\def\uu{\uparrow\uparrow}
\def\dd{\downarrow\downarrow}
\def\ls{\lesssim}
\def\gs{\gtrsim}

\def\R{\mathbb R}

\def\mp{{\mathfrak M}}
\def\W{{\mathcal W}}

\def\dual{\,^{^{\mathsf{c}}}\!}

\def\Btd {\dual\Bt}
\def\Bxd {\dual\Bx}
\def\Byd {\dual {B(0,y)}}

\def\Bt {{B(0,t)}}

\def\Bx {{B(0,x)}}


\begin{document}

\title{Multidimensional bilinear Hardy inequalities}

\author{N. B\.{I}LG\.{I}\c{C}L\.{I}, R.Ch.~Mustafayev, T.~{\"U}nver}

\address{Nevin Bilgi\c{c}li, Department of Mathematics, Faculty of Science and Arts, Kirikkale 	University, 71450 Yahsihan, Kirikkale, Turkey}
\email{nevinbilgicli@gmail.com}

\address{Rza Mustafayev, Department of Mathematics, Faculty of Science, Karamanoglu Mehmetbey University, Karaman, 70100, Turkey}
\email{rzamustafayev@gmail.com}

\address{Tu\u{g}\c{c}e {\"U}nver, Department of Mathematics, Faculty of Science and Arts, Kirikkale University, 71450 Yahsihan, Kirikkale, Turkey}
\email{tugceunver@gmail.com}

\subjclass[2010]{26D10, 26D15}

\keywords{multidimensional bilinear operators, multidimensional iterated Hardy inequalities, weights}

\begin{abstract}
Our goal in this paper is to find a characterization of $n$-dimensional bilinear Hardy inequalities
\begin{align*}
\bigg\| \,\int_{B(0,\cdot)} f \cdot \int_{B(0,\cdot)} g \,\bigg\|_{q,u,\I} & \leq C \, \|f\|_{p_1,v_1,\rn} \, \|g\|_{p_2,v_2,\rn}, \quad f,\,g \in \mp^+ (\rn), \notag \\
\intertext{and} \bigg\| \,\int_{\dual B(0,\cdot)} f \cdot \int_{\dual B(0,\cdot)} g \,\bigg\|_{q,u,\I} &\leq C \, \|f\|_{p_1,v_1,\rn} \, \|g\|_{p_2,v_2,\rn}, \quad f,\,g \in \mp^+ (\rn), 
\end{align*}
when $0 < q \le \infty$, $1 \le p_1,\,p_2 \le \infty$ and $u$ and $v_1,\,v_2$ are weight functions on $(0,\infty)$ and $\rn$, respectively.

Since the solution of the first inequality can be obtained from the characterization of the second one by usual change of variables we concentrate our attention on characterization of the latter. The characterization of this inequality is easily obtained for the range of parameters when $p_1 \le q$ using the characterizations of multidimensional weighted Hardy-type inequalites while in the case when $q < p_1$ the problem is reduced to the solution of multidimensional weighted iterated Hardy-type inequality.

To achieve the goal, we characterize the validity of multidimensional weighted iterated Hardy-type inequality
\begin{equation*}
\left\|\left\|\int_{\dual B(0,s)}h(z)dz\right\|_{p,u,(0,t)}\right\|_{q,\mu,\I}\leq
c \|h\|_{\t,v,\I},~ h \in \mathfrak{M}^+(\rn)
\end{equation*}
where $0 < p,\,q < +\infty$, $1\le\t \le\infty$, $u\in \W\I$, $v\in\W(\rn)$ and $\mu$ is a non-negative Borel measure on $\I$. We are able to obtain the characterization under the additional condition that the measure $\mu$ is non-degenerate with respect to $U^{{q} / {p}}$. 

\end{abstract}

\maketitle


\section{Introduction}\label{in}

The aim of this paper is to study the boundedness of $n$-dimensional bilinear Hardy operators $H_2^n: L^{p_1}(w_1) \times L^{p_2}(w_2) \rw L^q (u)$ and $(H_2^n)^*: L^{p_1}(w_1) \times L^{p_2}(w_2) \rw L^q (u)$, defined for all $f_1,\,f_2 \in \mp^+(\rn)$ by
\begin{align*}
H_2^n (f_1,f_2) (t) : & = \int_{B(0,t)} f_1 (x)\,dx \cdot \int_{B(0,t)} f_2 (x)\,dx, \quad t > 0, \\
\intertext{and} \big(H_2^n\big)^* (f_1,f_2) (t) : & = \int_{\dual B(0,t)} f_1 (x)\,dx \cdot \int_{\dual B(0,t)} f_2 (x)\,dx, \quad t > 0,
\end{align*}
that is, to investigate the validity of $n$-dimensional bilinear Hardy inequalities
\begin{align}
\bigg\| \,\int_{B(0,\cdot)} f \cdot \int_{B(0,\cdot)} g \,\bigg\|_{q,u,\I} & \leq C \, \|f\|_{p_1,v_1,\rn} \, \|g\|_{p_2,v_2,\rn}, \quad f,\,g \in \mp^+ (\rn), \label{eq.main0} \\
\intertext{and} \bigg\| \,\int_{\dual B(0,\cdot)} f \cdot \int_{\dual B(0,\cdot)} g \,\bigg\|_{q,u,\I} &\leq C \, \|f\|_{p_1,v_1,\rn} \, \|g\|_{p_2,v_2,\rn}, \quad f,\,g \in \mp^+ (\rn). \label{eq.main}
\end{align}

The motivation of the investigation of $n$-dimensional $m$-linear Hardy ineqalities can be explained, for instance, by the paper \cite{LOPTT}, 
where a weight theory has been developed for a new multi(sub)linear maximal function 
$$
{\mathcal M}(f_1,\cdots, f_m)(x) : = \sup_{Q \ni x} \prod_{i=1}^{m} \frac{1}{|Q|} \int_Q |f_i (y_i)|\,dy_i, \quad x \in \rn,
$$
where the supremum is taken over all cubes in $\rn$ containing $x$ with sides parallel to the coordinate axes, introduced in order to control the multilinear Calder\'{o}n-Zygmund operators. Recall that, this operator is strictly smaller that the $m$-fold product of $M$, that is, the operator $\prod_{i=1}^{m} Mf_j$, where $M$ is the Hardy-Littlewood maximal operator.
Drawing paralells between linear and $m$-linear theories, in our opinion, it will be useful to know a characterization of weight functions for which 
$n$-dimensional $m$-linear Hardy operator
$$
H_m^n (f_1,\cdots, f_m)(t) : = \int_{B(0,t)} f_1 (x)\,dx \cdots \int_{B(0,t)} f_m (x)\,dx, \quad t > 0
$$
is bounded from $L^{p_1}(w_1) \times \cdots \times L^{p_m}(w_m)$ into $L^p(u)$, that is, the inequality
$$
\|H_m^n (f_1,\cdots, f_m)\|_{L^p(u)} \le C \|f_1\|_{L^{p_1}(w_1)} \cdots \|f_m\|_{L^{p_m}(w_m)}
$$
holds.

In one-dimensional case, the bilinear Hardy operator $H_2 \equiv H_2^1$, acting on ${\mathfrak M}^+(0,\infty) \times {\mathfrak M}^+(0,\infty)$, is defined by
\begin{align*}
H_2 (f,g)(x) & = \int_0^x f(t)\,dt \cdot \int_0^x g(t)\,dt. 
\end{align*}
As far as we know, the boundedness of $H_2: {\mathfrak M}^+(0,\infty) \times {\mathfrak M}^+(0,\infty) \rw L^q (u)$, that is, the bilinear Hardy inequality
\begin{equation}\label{eq.4}
\bigg( \int_0^{\infty} \bigg( \int_0^x f \cdot \int_0^x g \bigg)^q u(x)\,dx
\bigg)^{1 / q} \leq C \, \bigg(\int_0^{\infty} f^{p_1}  v_1 \bigg)^{1 / p_1} \, \bigg(\int_0^{\infty} g^{p_2}  v_2 \bigg)^{1 / p_2}, \quad f,\,g \in \mp^+ (0,\infty)
\end{equation}
has not been considered previously in the literature, apart from the following papers: The papers \cite{cwiker} and \cite{graf.tor.2001}  work with general bilinear operators and characterize their boundedness, in the case $1 / q \ge 1 / p_1 + 1 / p_2$, by means of a Schur-type criterion. The boundedness of $H_2: L^{p_1}(v_1) \times L^{p_2}(v_2)\rightarrow L^q (u)$ was characterized recently in \cite{agu.or.ra.2012} via the discretization method, and in \cite{Krep} using the iteration method. The range of exponents in both papers was $1 < p_1,\,p_2,\,q < \infty$. 

As in 1-dimensional case (cf. \cite{Krep}), the characterization of $n$-dimensional bilinear Hardy inequalities can be easily obtained using the characterizations of multidimensional weighted Hardy-type inequalites, when $p_1 \le q$ (see, Theorems \ref{thm.main.0}, \ref{thm.main.00} and \ref{thm.main.000}). In the most difficult case when $q < p_1$, interchanging the suprema and applying the multidimensional weighted Hardy-type inequalities, by integrating by parts, we get that inequality \eqref{eq.main} is equivalent to the inequality
$$
\bigg( \int_0^{\infty}  \bigg( \int_0^x \bigg( \int_{\Btd} g \bigg)^q u(t) \,dt \bigg)^{r_1 / q} \, d \, \bigg[ - \big\| v_1^{- 1 / p_1} \big\|_{p_1^{\prime},\Bxd}^{r_1} \bigg] \bigg)^{1 / r_1} \le C \|g\|_{p_2,v_2,\rn}, \quad g \in \mp^+ (\rn)
$$
with $1 / r_1 = 1 / q - 1 / p_1$ (see, Theorem \ref{main.thm}).

In this paper we characterize the validity of the multidimensional weighted iterated Hardy-type inequality
\begin{equation}\label{mainn}
\left\|\left\|\int_{\dual B(0,s)}h(z)dz\right\|_{p,u,(0,t)}\right\|_{q,\mu,[0,\infty)}\leq
c \|h\|_{\t,v,\I},~ h \in \mathfrak{M}^+(\rn),
\end{equation}
where $0 < p,\,q <\infty$, $1\le\t \le\infty$, $u\in \W\I$, $v\in\W(\rn)$ and $\mu$ is a non-negative Borel measure on $\I$ (see, Theorem \ref{Thm.4.1}). We are able to obtain the characterization under the additional condition that the measure $\mu$ is non-degenerate with respect to $U^{{q} / {p}}$, that is, conditions  \eqref{eq.nondeg} are satisfied. 

In 1-dimensional case there exist different solutions of iterated Hardy-type inequalities
\begin{equation}\label{mainn0}
\left\| \left\|\int_t^{{\infty}} h(\tau) \, d\tau \right\|_{p,u,(0,\cdot)}\right\|_{q,w,(0,\infty)}\leq C
\,\|h\|_{\theta,v,(0,\infty)}, ~ h \in \mathfrak{M}^+(0,\infty),
\end{equation}
where $0 < p,\,q \leq \infty$, $1 \le \theta \le \infty$ and $u,w,v\in \W\I$.  

Note that inequality \eqref{mainn0} have been considered in the case $p=1$ in \cite{gop2009} (see also \cite{g1}), where the result was
presented without proof, in the case $p=\infty$ in \cite{gop} and in the case $\t=1$ in \cite{gjop} and \cite{ss}, where the special
type of weight function $v$ was considered. Recall that the inequality has been completely characterized	in \cite{gmp} and \cite{gmp2013} in the case $0<p<\infty$, $0<q\leq \infty$, $1 \le \theta \le \infty$ by using discretization and anti-discretization methods. Another approach to get the characterization of inequalities \eqref{mainn0} was presented in \cite{prok.step.2013}.	But these characterizations involve auxiliary functions, which make conditions more complicated. The characterization of the inequality can be reduced to the characterization of the weighted Hardy	inequality on the cones of non-increasing functions (see, \cite{gog.mus.2017_1,gog.mus.2017_2}). Different approach to solve iterated Hardy-type inequalities has been given in \cite{mus.2017}. In order to characterize inequality \eqref{mainn} we will use the technique from 	\cite{gmp} and \cite{gmp2013}.

It should be noted that none of the above would ever have existed if
it wasn't for the (now classical) well-known characterizations of
weights for which the Hardy inequality holds. This subject, which
is, incidentally, exactly one hundred years old, is absolutely
indispensable   in this part of mathematics  (cf. \cite{ok,kp}). In our proof below multidimensional analogues of such
results will be heavily used from \cite{ChristGraf,DrabHeinKuf,mu.emb}.	
	
The paper is organized as follows. We start with some notations and preliminaries in Section~\ref{pre}.  The discretization and anti-discretization methods for solution of inequalities \eqref{mainn} are given in Sections~\ref{mr} and \ref{Antidiscretization}, respectively. Finally, the solutions of multidimensional bilinear Hardy inequalities are presented in Section \ref{main}.

\section{Notations and Preliminaries}\label{pre}

Throughout the paper, we always denote by  $c$ or $C$ a positive
constant, which is independent of the main parameters but it may
vary from line to line. However a constant with subscript such as
$c_1$ does not change in different occurrences. By $a\lesssim b$,
($b\gtrsim a$) we mean that $a\leq \la b$, where $\la >0$ depends
only on inessential parameters. If $a\lesssim b$ and $b\lesssim
a$, we write $a\approx b$ and say that $a$ and $b$ are
equivalent. Throughout the paper we use the abbreviation $\LHS
(*)$ ($\RHS(*)$) for the left (right) hand side of the relation
$(*)$. By $\chi_Q$ we denote the characteristic function of a set
$Q$. Unless a special remark is made, the differential element
$dx$ is omitted when the integrals under consideration are the
Lebesgue integrals.

For $x\in \rn$ and $r>0$, let $B(x,r):=\{y\in \rn: |x-y|< r\}$ be the open ball centered at $x$ of radius $r$ and $\dual B(x,r):= \rn \backslash B(x,r)$. We define $S[a,b):=\{x\in \rn: a\le |x|< b\}=\dual B(0,a) \backslash \dual B(0,b)$, where $0 \le a < b < \infty$.

Let $\Omega$ be any measurable subset of $\rn$, $n\geq 1$. Let $\mu$ be a non-negative measure on $\Omega$. By
$\mp(\Omega,\mu)$ we denote the set of all $\mu$-measurable functions on $\Omega$. The symbol $\mp^+ (\Omega,\mu)$ stands for the
collection of all $f\in\mp (\Omega,\mu)$ which are non-negative on
$\Omega$. The family of all weight functions (also called just weights) on
$\Omega$, that is, locally integrable with respect to measure $\mu$ non-negative functions on
$\Omega$, is given by $\W(\Omega,\mu)$. If the measure $\mu$ is the Lebesgue measure on $I$, then we omit the symbol $\mu$ in the notation. 

For $p\in (0,\infty]$ and $w\in \mp^+(\Omega,\mu)$, we define the functional
$\|\cdot\|_{p,w,\Omega,\mu}$ on $\mp (\Omega,\mu)$ by
\begin{equation*}
\|f\|_{p,w,\Omega,\mu} : = \left\{\begin{array}{cl}
\left(\int_{\Omega} |f(x)|^p w(x)\,d\mu(x) \right)^{1/p} & \qq\mbox{if}\qq p<\infty, \\
\esup_{\Omega} |f(x)|w(x) & \qq\mbox{if}\qq p=\infty.
\end{array}
\right.
\end{equation*}

If, in addition, $w\in \W(\Omega,\mu)$, then the weighted Lebesgue space
$L^p(w,\Omega,\mu)$ is given by
\begin{equation*}
L^p(w,\Omega,\mu) = \{f\in \mp (\Omega,\mu):\,\, \|f\|_{p,w,\Omega,\mu} <
\infty\}
\end{equation*}
and it is equipped with the quasi-norm $\|\cdot\|_{p,w,\Omega,\mu}$.

When $w\equiv 1$ on $\Omega$, we write simply $L^p(\Omega)$ and
$\|\cdot\|_{p,\Omega}$ instead of $L^p(w,\Omega)$ and
$\|\cdot\|_{p,w,\Omega}$, respectively.

We denote for $u,v \in \W \I$ and $1 \le \theta \le \infty$ by
$$
U(t) : =\int_0^t u(s)ds,\qq V_{\t}(t) :=
\left\{\begin{array}{cl}
\big\|v^{-{1} / {\t}}\big\|_{\t',\Btd}, & ~~ \mbox{when}\,\, \theta <\infty, \vspace{0.1cm}\\
\big\|v^{-1}\big\|_{1,\Btd}, & ~~ \mbox{when}\,\,
\t=\infty,
\end{array}
\right. \qq  t\in\I,
$$
and assume that $U(t) > 0$, $t\in\I$.

\begin{conv}\label{Notat.and.prelim.conv.1.1}
{\rm (i)} Throughout the paper we put $0 \cdot \infty = 0$, $\infty / \infty =
0$ and $0/0 = 0$.

{\rm (ii)} If $\theta \in [1,+\infty]$, we define $\theta'$ by $1 / \theta + 1 / \theta' = 1$.

{\rm (iii)} If $I = (a,b) \subseteq \R$ and $g$ is a monotone
function on $I$, then by $g(a)$ and $g(b)$ we mean the limits
$\lim_{x\rw a+}g(x)$ and $\lim_{x\rw b-}g(x)$, respectively.
\end{conv}

Let us now recall some definitions and basic facts concerning
discretization and anti-discretization which can be found in
\cite{gp1}, \cite{gp2} and \cite{gjop}.
\begin{defi}\label{def.2.1}
Let $\{a_k\}$ be a sequence of positive real numbers. We say that
$\{a_k\}$ is geometrically increasing or geometrically decreasing
and write $a_k\uu$ or $a_k\dd$ when
$$
\inf_{k\in\Z}\frac{a_{k+1}}{a_k}>1 ~~\mbox{or}
~~\sup_{k\in\Z}\frac{a_{k+1}}{a_k}<1,
$$
respectively.
\end{defi}

\begin{defi}\label{def.2.2}
Let $b$ be a continuous strictly increasing function on $[0,\infty)$ such that $b(0)=0$ and $\lim\limits_{t\rightarrow\infty} b(t)=\infty$. Then
we say that $b$ is \emph{admissible}.
\end{defi}

\begin{defi}\label{defi.2.5}
Let $b$ be an admissible function. A function $g$ is called \emph{$b$-quasiconcave} if $g$ is equivalent to an increasing function on $(0,\infty)$ and ${g} / b$ is equivalent to a decreasing function on $(0,\infty)$.
\end{defi}

\begin{defi}\label{defi.2.6}
A $b$-quasiconcave function $g$ is called \emph{non-degenerate} if
$$
\lim_{t\rightarrow 0+} g(t) = \lim_{t\rightarrow\infty} \frac{1}{g(t)} = \lim_{t\rightarrow\infty} \frac{g(t)}{b(t)} = \lim_{t\rightarrow 0+} \frac{b(t)}{g(t)}=0.
$$
The family of non-degenerate $b$-quasiconcave functions will be denoted by $\O_b$. 	
\end{defi}

\begin{defi}\label{def.2.3}
Assume that $b$ is admissible and $g \in \O_b$. We say that $\{x_k\}_{k\in\Z}$ is a \emph{discretizing sequence} for $g$ with respect to $b$ if

(i) $x_0=1$ and $b(x_k)\uu$;

(ii) $g(x_k)\uu$ and $\frac{g(x_k)}{b(x_k)}\dd$;

(iii) there is a decomposition $\Z=\Z_1\cup\Z_2$ such that
$\Z_1\cap\Z_2=\emptyset$ and for every $t\in [x_k,x_{k+1}]$
$$
g(x_k)\thickapprox g(t) ~~\mbox{if} ~~ k\in\Z_1,
$$
$$
\frac{g(x_k)}{b(x_k)}\thickapprox \frac{g(t)}{b(t)} ~~\mbox{if}~~ k\in\Z_2.
$$
\end{defi}

Note that if $g\in\O_b$, then there always exists a discretizing sequence for $g$ with respect to $b$ (see, for instance, \cite[Lemma 2.7]{gp1}).

Finally, if $q\in (0,+\infty]$ and $\{w_k\}=\{w_k\}_{k\in \Z}$ is
a sequence of positive numbers, we denote by $\ell^q(\{w_k\},\Z)$
the following  discrete analogue of a weighted Lebesgue space: if
$ 0<q<+\infty$, then
\begin{align*}
\ell^q(\{w_k\},\Z) & = \left\{ \{a_k\}_{k\in\Z} :\,\,
\|a_k\|_{\ell^q(\{w_k\},\Z)}
:=\left(\sum_{k\in \Z}|a_kw_k|^q\right)^{1 / q}<+\infty \right\} \\
\intertext{and} \ell^\infty(\{w_k\},\Z) & = \left\{ \{a_k\}_{k\in\Z}:\,\,
\|a_k\|_{\ell^\infty(\{w_k\},\Z)}:=\sup_{k\in\Z}|a_kw_k|<+\infty
\right\}.
\end{align*}
If $w_k=1$ for all $k\in\Z$, we write simply $\ell^q(\Z)$ instead
of $\ell^q(\{w_k\},\Z)$.

We quote some known results (see, for instance, \cite[Lemma 3.1 and 3.2]{gp1}). 
\begin{lem}\label{lem.2.3}
Let $q\in (0,+\infty ]$. If $\{ \tau_k\} _{k\in\Z}$ is a
geometrically decreasing sequence, then
\begin{equation*}
\left\| \tau _k \sum_{m \le k} a_m\right\| _{\ell^q(\Z)} \approx \|
\tau _k a_k\| _{\ell^q(\Z)}
\end{equation*}
and
\begin{equation*}
\left\| \tau _k \sup _{m\leq k}a_m \right\| _{\ell^q(\Z)} \approx
\|\tau _ka_k\| _{\ell^q(\Z)}
\end{equation*}
for all non-negative sequences $\{a_k\}_{k\in\Z}$.

Let $\{ \sigma_k\} _{k\in\Z}$ be a geometrically increasing
sequence. Then
\begin{equation*}
\left\| \sigma _k \sum_{m \ge k}a_m\right\| _{\ell^q(\Z)} \approx \|
\sigma _ka_k\| _{\ell^q(\Z)}
\end{equation*}
and
\begin{equation*}
\left\| \sigma _k \sup _{m\geq k}a_m \right\| _{\ell^q(\Z)} \approx
\|\sigma _ka_k\| _{\ell^q(\Z)}
\end{equation*}
for all non-negative sequences $\{ a_k\} _{k\in\Z}$.
\end{lem}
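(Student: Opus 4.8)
This is a standard discrete Hardy-type estimate (as the phrase "We quote some known results" indicates), and the plan is to reduce all four inequalities to a single convolution bound on $\ell^q(\Z)$. First I would dispose of the easy direction: since every $a_m\ge0$ and the index $k$ itself lies in the range of the sum (resp. supremum), one has $\sum_{m\le k}a_m\ge a_k$ and $\sup_{m\le k}a_m\ge a_k$, and likewise for $m\ge k$, so $\|\tau_ka_k\|_{\ell^q(\Z)}$ is trivially dominated by each of the corresponding left-hand sides. Moreover $\sup_{m\le k}a_m\le\sum_{m\le k}a_m$ pointwise in $k$, so the two ``supremum'' statements follow at once from the two ``sum'' statements; it thus remains only to prove
\[
\left\|\tau_k\sum_{m\le k}a_m\right\|_{\ell^q(\Z)}\lesssim\|\tau_ka_k\|_{\ell^q(\Z)}
\qquad\text{and}\qquad
\left\|\sigma_k\sum_{m\ge k}a_m\right\|_{\ell^q(\Z)}\lesssim\|\sigma_ka_k\|_{\ell^q(\Z)}.
\]

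For the first of these, set $\lambda:=\sup_{k\in\Z}\tau_{k+1}/\tau_k<1$; telescoping gives $\tau_k/\tau_m\le\lambda^{k-m}$ whenever $m\le k$. Writing $b_m:=\tau_ma_m\ge0$, this yields the pointwise bound
\[
\tau_k\sum_{m\le k}a_m=\sum_{m\le k}\frac{\tau_k}{\tau_m}\,b_m\le\sum_{m\le k}\lambda^{k-m}b_m,
\]
that is, the left-hand side is controlled by the discrete convolution of $\{b_m\}$ with the summable kernel $c_j:=\lambda^j$ for $j\ge0$ and $c_j:=0$ for $j<0$, which has $\|c\|_{\ell^1(\Z)}=(1-\lambda)^{-1}$. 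Now I would split on the value of $q$. For $1\le q\le\infty$ the bound $\|c*b\|_{\ell^q}\le\|c\|_{\ell^1}\|b\|_{\ell^q}$ is exactly Young's convolution inequality. For $0<q<1$, where $\ell^q$ is not normed and Young's inequality is unavailable, I would instead use the elementary inequality $\big(\sum_i x_i\big)^q\le\sum_i x_i^q$ for $x_i\ge0$ together with interchange of the order of summation (all terms being non-negative):
\[
\sum_k\Big(\sum_{m\le k}\lambda^{k-m}b_m\Big)^q\le\sum_k\sum_{m\le k}\lambda^{(k-m)q}b_m^q=\sum_m b_m^q\sum_{k\ge m}\lambda^{(k-m)q}=\frac{1}{1-\lambda^q}\sum_m b_m^q .
\]
In both cases the implied constant depends only on $\lambda$ and $q$, which are inessential parameters, so the first inequality follows.

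The second (``dual'') inequality is handled by the same argument after reflecting the index: with $\delta:=\inf_{k\in\Z}\sigma_{k+1}/\sigma_k>1$ one gets $\sigma_k/\sigma_m\le\delta^{-(m-k)}$ for $m\ge k$, so with $b_m:=\sigma_ma_m$ the left-hand side is dominated by $\sum_{m\ge k}\delta^{-(m-k)}b_m$, a convolution against the summable sequence $j\mapsto\delta^{-j}\chi_{[0,\infty)}(j)$, and the two cases $q\ge1$ (Young) and $0<q<1$ ($q$-subadditivity plus interchange of summation) go through verbatim. I do not expect any serious obstacle here; the only point that needs a moment's care — and the reason one cannot simply invoke the normed-space convolution theorem throughout — is the sub-unit range $0<q<1$, which forces the switch to the $q$-subadditivity trick and an explicit interchange of the order of summation. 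Everything else is routine.
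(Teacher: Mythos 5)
Your proof is correct. The paper does not prove this lemma --- it explicitly quotes it from \cite[Lemmas 3.1 and 3.2]{gp1} with the phrase ``We quote some known results'' --- so there is no in-paper argument to compare against; your approach (lower bound from the $m=k$ term, reduction of the supremum cases to the sum cases via $\sup\le\sum$, change of variables $b_m=\tau_m a_m$ turning the sum into a discrete convolution with a one-sided geometric kernel, then Young's inequality for $1\le q\le\infty$ and $q$-subadditivity plus Tonelli for $0<q<1$) is the standard one and handles all cases. One cosmetic remark: for the $\sigma_k$-inequality you write the kernel as $j\mapsto\delta^{-j}\chi_{[0,\infty)}(j)$; with the usual convolution $\sum_m c_{k-m}b_m$ the correct kernel supported on $j\le 0$ is $j\mapsto\delta^{j}$, but since reflection does not change $\ell^1$-norms this does not affect the Young's-inequality bound, and the explicit estimate you give for $0<q<1$ is anyway written directly without invoking a named convolution.
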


Given two (quasi-)Banach spaces $X$ and $Y$, we write $X
\hookrightarrow Y$ if $X \subset Y$ and if the natural embedding
of $X$ in $Y$ is continuous.

The following statement is discrete version of the classical
Landau resonance theorem. Proof can be found, for example, in
\cite{gp1}.
\begin{prop}\label{prop.2.1}{\rm(\cite[Proposition 4.1]{gp1})}
	Let $0 < \theta,\,q \le +\infty$ and let $\{v_k\}_{k\in\Z}$ and $\{w_k\}_{k\in\Z}$ be
	two sequences of positive numbers. Assume that
	\begin{equation}\label{eq31-4651}
	\ell^{\theta} (\{v_k\},\Z) \hookrightarrow \ell^q (\{w_k\},\Z).
	\end{equation}
	Then
	\begin{equation*}\label{eq31-46519009}
	\big\|\big\{w_k v_k^{-1}\big\}\big\|_{\ell^\rho(\Z)} \le C,
	\end{equation*}
	where $1 / \rho : = ( 1 / q - 1 / \theta)_+$ \footnote{For any $a\in\R$ denote
		by $a_+ = a$ when $a>0$ and $a_+ = 0$ when $a \le 0$.} and $C$ stands for the norm of
	embedding \eqref{eq31-4651}.
\end{prop}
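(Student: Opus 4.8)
The plan is to strip the embedding down to a purely arithmetic inequality and then test it against one well-chosen family of sequences. Since the (quasi-)norms of $\ell^\theta(\{v_k\},\Z)$ and $\ell^q(\{w_k\},\Z)$ depend only on the moduli of the entries, one may assume throughout that all sequences are non-negative. Put $d_k := w_k v_k^{-1}$ and substitute $c_k := a_k v_k$; the hypothesis then says exactly that
\[
\big\| \{ c_k d_k \} \big\|_{\ell^q(\Z)} \le C \, \big\| \{ c_k \} \big\|_{\ell^\theta(\Z)}
\]
for every non-negative sequence $\{c_k\}_{k\in\Z}$, and the conclusion to be reached is precisely $\| \{ d_k \} \|_{\ell^\rho(\Z)} \le C$.

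First I would treat the range $q \ge \theta$ (which also covers $q = \infty$), where $1 / \rho = (1 / q - 1 / \theta)_+ = 0$ and hence $\rho = \infty$. Inserting the unit sequence $c_k = \delta_{kj}$ into the displayed inequality gives $d_j \le C$ for every $j \in \Z$, so $\| \{ d_k \} \|_{\ell^\infty(\Z)} = \sup_k d_k \le C$.

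The main case is $q < \theta$ (with $q < \infty$; $\theta = \infty$ permitted), where $1 / \rho = 1 / q - 1 / \theta > 0$, i.e.\ $\rho = q\theta / (\theta - q) \in (0,\infty)$. Here the trick is to choose the test sequence so that both sides of the displayed inequality become powers of one and the same number. Fix a finite set $F \subset \Z$ and take $c_k := d_k^{\alpha} \chi_F(k)$ with $\alpha := q / (\theta - q)$ (read as $c_k := \chi_F(k)$ when $\theta = \infty$). Because $v_k, w_k > 0$, every $d_k$ is strictly positive, so $S_F := \sum_{k \in F} d_k^{\rho}$ lies in $(0,\infty)$. The exponent $\alpha$ is picked exactly so that $\alpha \theta = (\alpha+1) q = \rho$; consequently $\| \{ c_k \} \|_{\ell^\theta(\Z)} = S_F^{1 / \theta}$ and $\| \{ c_k d_k \} \|_{\ell^q(\Z)} = S_F^{1 / q}$. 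Feeding these into the displayed inequality gives $S_F^{1 / q} \le C \, S_F^{1 / \theta}$, whence $S_F^{1 / q - 1 / \theta} = S_F^{1 / \rho} \le C$. Letting $F$ increase to $\Z$ yields $\| \{ d_k \} \|_{\ell^\rho(\Z)} \le C$, as required.

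There is no genuine obstacle here: the whole argument is "substitute and test". The only points that demand a little care are the exponent bookkeeping (verifying that $\alpha = q/(\theta-q)$ really forces $\alpha\theta = (\alpha+1)q = \rho$), the trivial but necessary observation that $S_F$ is finite and non-zero so that one may divide by its powers, and the consistent reading of the boundary cases $\theta = \infty$ and $q = \infty$ in the sense of Convention~\ref{Notat.and.prelim.conv.1.1}, interpreting every $\ell^\infty$-quantity as a supremum.
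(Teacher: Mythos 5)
Your proof is correct. The paper itself does not reproduce a proof of this proposition; it is cited from Gogatishvili--Pick (\cite[Proposition 4.1]{gp1}) and described as a discrete version of the Landau resonance theorem, which is exactly the argument you have reconstructed: reduce the embedding to $\|\{c_k d_k\}\|_{\ell^q}\le C\|\{c_k\}\|_{\ell^\theta}$ for $d_k=w_k v_k^{-1}$, then test with $c_k=\delta_{kj}$ when $q\ge\theta$ and with the resonance sequence $c_k=d_k^{\,q/(\theta-q)}\chi_F(k)$ (finite $F$, with the convention $c_k=\chi_F(k)$ when $\theta=\infty$) when $q<\theta$, so that both sides become powers of $S_F=\sum_{k\in F}d_k^\rho$ and the estimate $S_F^{1/\rho}\le C$ drops out. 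The exponent bookkeeping $\alpha\theta=(\alpha+1)q=\rho$ checks out, the restriction to nonempty finite $F$ avoids dividing by zero or manipulating an a priori infinite sum, and the $\ell^\infty$ boundary cases are read correctly as suprema; nothing is missing.
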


We shall use the following inequality, which is a simple consequence
of the discrete H\"{o}lder inequality:
\begin{equation}\label{discrete.Hold.}
\|\{ a_k b_k \}\|_{\ell^q (\Z)} \le \|\{ a_k \}\|_{\ell^{\rho}
	(\Z)} \|\{ b_k \}\|_{\ell^{\theta} (\Z)}.
\end{equation}


\section{Discretization of Inequality \eqref{mainn}}\label{mr}

In this section we discretize the inequality 
\begin{align}
\left(\int_{[0,\infty)}\left(\frac{1}{U(t)}\int_0^t\left(\int_{\dual B(0,y)} h(z)dz\right)^p u(y)dy\right)^{{q} / {p}} \,d\mu(t) \right)^{{1} / {q}}\leq c \|h\|_{\t,v,\rn}. \label{eq.4.1}
\end{align}
At first we do the following remarks.

\begin{rem}\label{cor.2.0}
	Recall that, if $F$ is a non-negative non-increasing function on $\I$, then
	\begin{equation}\label{Fubini.1}
	\esup_{t \in (0,\infty)} F(t)G(t) = \esup_{t \in (0,\infty)} F(t)
	\esup_{\tau \in (0,t)} G(\tau);	
	\end{equation}
	likewise, when $F$ is a non-negative non-decreasing function on $\I$, then
	\begin{equation}\label{Fubini.2}	
	\esup_{t \in (0,\infty)} F(t)G(t) = \esup_{t \in (0,\infty)} F(t)
	\esup_{\tau \in (t,\infty)} G(\tau)
	\end{equation}
	(see, for instance, \cite[p. 85]{gp2}).
	
	Given a non-negative non-decreasing function $b$ on $\I$ denote by
	$$
	{\mathcal B}(x,t) : = \frac{b(x)}{b(x) + b(t)} \qquad (x > 0,\, t > 0).
	$$
	Observe that
	$$
	{\mathcal B}(x,t) \approx \min \bigg\{ 1,\frac{b(x)}{b(t)} \bigg\}.
	$$	
	It is easy to see that ${\mathcal B}(x,t)$ is $b$-quasiconcave function of $x$ for any fixed $t > 0$.

	It have been shown in \cite[p. 85]{gp2} that the relations
	\begin{align}
	\esup_{t \in (0,\infty)}{\mathcal B}(x,t) g(t) & \approx \esup_{t \in (0,\infty)} g(t) \min \bigg\{ 1,\frac{b(x)}{b(t)} \bigg\} \notag \\
	& = \esup_{t \in (0,x)} b(t) \esup_{\tau \in (t,\infty)} \frac{g(\tau)}{b(\tau)} \notag \\
	& = b(x) \esup_{t \in (x,\infty)} \frac{1}{b(t)} \esup_{\tau \in (0,t)} g(\tau)   \label{Fubini.3}
	\end{align}
	holds for any $g\in {\mathfrak M}^+ (0,\infty)$. Consequently,  $\esup_{t \in (0,\infty)}{\mathcal B}(x,t) g(t)$ is $b$-quasiconcave function. 
\end{rem}

\begin{rem}
Let $0 < p,\,q < \infty$. Suppose that $U$ is admissible on $\I$. Assume that $\mu$ is a non-negative Borel measure on $[0,\infty)$ and $\vp$ is the fundamental function of $\mu$ with respect to $U^{{q} / {p}}$, that is,
\begin{equation}\label{eq.4.3}
\vp(x): =\int_{[0,\infty)}{\mathcal U}(x,y)^{{q} / {p}} d\mu (y)
\qquad\mbox{for all} \qquad x\in \I,
\end{equation}
where
$$
{\mathcal U}(x,t): = \frac{U(x)}{U(t)+U(x)}.
$$
Assume that the measure $\mu$ is non-degenerate with respect to
$U^{{q} / {p}}$:
\begin{equation}\label{eq.nondeg}
\int_{[0,\infty)}\frac{d\mu(t)}{U(t)^{{q} / {p}}+U(x)^{{q} / {p}}}<\infty,~x\in\I~ \mbox{and}
~\int_{[0,1]}\frac{d\mu(t)}{U(t)^{{q} / {p}}}=\int_{[1,\infty)}d\mu(t)=\infty.
\end{equation}
Then $\vp\in\O_{U^{{q} / {p}}}$, and therefore
there exists a discretizing sequence for $\vp$ with respect to
$U^{{q} / {p}}$. Let $\{x_k\}$ be one such sequence. Then
$\vp(x_k)\uu$ and $\vp(x_k)U^{-{q} / {p}}\dd$. Furthermore, there
is a decomposition $\Z=\Z_1\cup\Z_2$, $\Z_1\cap\Z_2=\emptyset$ such
that for every $k\in\Z_1$ and $t\in[x_k,x_{k+1}]$, $\vp (x_k)
\ap\vp(t)$ and for every $k\in\Z_2$ and $t\in[x_k,x_{k+1}]$,
$\vp(x_k){U(x_k)}^{-{q} / {p}}\ap\vp(t){U(t)}^{-{q} / {p}}$ (see \cite[Remark 2.10]{gp1}).
\end{rem}

\begin{lem}\label{Lem3.0}
Let $0 < p,\,q < \infty$ and let $u,\,w\in {\mathcal W}\I$. Assume that $u$ is such that $U$ is admissible. Suppose that non-negative Borel measure $\mu$ on $[0,\infty)$ is non-degenerate with respect
to $U^{{q} / {p}}$. Let $\{x_k\}$ be any discretizing sequence for the fundamental function $\vp$ of $\mu$ with respect to $U^{{q} / {p}}$. Then	
\begin{align*}
\lhs \eqref{eq.4.1}  \, \ap 
\left\|\left\{\left\|\int_{S[y,x_k)}h(z)dz\right\|_{p,u,I_k}\frac{\vp(x_k)^{{1} / {q}}}{U(x_k)^{{1} / {p}}}\right\}\right\|_{\ell^q(\Z)} + \left\|\left\{\vp(x_k)^{{1} / {q}}\int_{S[x_k,x_{k+1})} h(z)dz \right\}\right\|_{\ell^q(\Z)}.
\end{align*}
\end{lem}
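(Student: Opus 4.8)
The strategy is the standard discretization of an iterated norm against a Borel measure that is non-degenerate with respect to a quasiconcave fundamental function, following the scheme of \cite{gmp} and \cite{gmp2013}. Write $F(t) := \frac{1}{U(t)}\int_0^t\big(\int_{\dual B(0,y)} h(z)\,dz\big)^p u(y)\,dy$, so that $\lhs\eqref{eq.4.1} = \big\|F^{1/p}\big\|_{q,\mu,[0,\infty)}$, and observe that $G(t) := U(t)F(t) = \int_0^t(\int_{\dual B(0,y)} h)^p u(y)\,dy$ is non-decreasing, while $F$ itself is roughly $U(t)^{-1}$ times a non-decreasing function. First I would use the non-degeneracy hypothesis \eqref{eq.nondeg} to conclude (as recorded in the Remark) that $\vp\in\O_{U^{q/p}}$, so a discretizing sequence $\{x_k\}$ for $\vp$ with respect to $U^{q/p}$ exists, together with the splitting $\Z=\Z_1\cup\Z_2$.

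The core step is to replace the continuous measure integral $\int_{[0,\infty)}G(t)^{q/p}\,d\mu(t)$ by the discrete sum $\sum_k \vp(x_k) G(x_k)^{q/p}$ (equivalently $\sum_k \vp(x_k)U(x_k)^{q/p}F(x_k)^{q/p}$). This is the anti-discretization: since $G$ is non-decreasing one has $G(t)\ap G(x_k)$ essentially on each block for $k$ in one of the index sets, and the interplay between $\vp$ being comparable to a constant on $\Z_1$-blocks and $\vp/U^{q/p}$ being comparable to a constant on $\Z_2$-blocks lets one sum $\int_{[x_k,x_{k+1})}\,d\mu$ against the appropriate monotone factor; here I expect to invoke \eqref{Fubini.3} and the monotonicity relations of Remark~\ref{cor.2.0} to handle the two cases, and Lemma~\ref{lem.2.3} to pass between $\sup$ over blocks and the value at the endpoint. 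This yields
\[
\lhs\eqref{eq.4.1} \ap \left\|\left\{\vp(x_k)^{1/q}\,\frac{1}{U(x_k)^{1/p}}\left\|\int_0^{x_k}\Big(\int_{\dual B(0,y)} h\Big)^p u(y)\,dy\right\|_\infty^{1/p}\right\}\right\|_{\ell^q(\Z)}.
\]

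Next I would split the inner integral $\int_0^{x_k}$ into the dyadic blocks $I_j=[x_j,x_{j+1})$ with $j<k$ and decompose $\int_{\dual B(0,y)} h = \int_{S[y,x_k)} h + \int_{\dual B(0,x_k)} h$ for $y\in I_j$, and further telescope $\int_{\dual B(0,x_k)} h = \sum_{m\ge k}\int_{S[x_m,x_{m+1})} h$. The term with $\int_{S[y,x_k)} h$ contributes, after using $\vp(x_k)^{1/q}U(x_k)^{-1/p}$ as a geometrically increasing-in-$k$ weight against the partial-sum structure in $j$ and applying Lemma~\ref{lem.2.3}, precisely the first sum $\big\|\{\|\int_{S[y,x_k)} h\|_{p,u,I_k}\,\vp(x_k)^{1/q}U(x_k)^{-1/p}\}\big\|_{\ell^q}$. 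The term with $\int_{\dual B(0,x_k)} h$ gives $\vp(x_k)^{1/q}U(x_k)^{-1/p}\cdot U(x_k)^{1/p}\cdot\sum_{m\ge k}\int_{S[x_m,x_{m+1})} h = \vp(x_k)^{1/q}\sum_{m\ge k}\int_{S[x_m,x_{m+1})} h$ (using $\|u\|_{1,I_j}\le U(x_k)$ summed over $j<k$), and since $\vp(x_k)^{1/q}\uu$ we may apply the geometrically-increasing part of Lemma~\ref{lem.2.3} to collapse $\sum_{m\ge k}$ and obtain the second sum. The reverse estimates ($\gs$) come from testing the blocks individually. The main obstacle, as usual in this circle of ideas, is the bookkeeping in the anti-discretization step — keeping careful track of which monotone factor ($\vp$ or $\vp/U^{q/p}$) dominates on $\Z_1$- versus $\Z_2$-blocks, and verifying that the endpoint values $\vp(x_k)$, $U(x_k)$ genuinely control the measure of each block in both directions; the rest is routine once \eqref{Fubini.3} and Lemma~\ref{lem.2.3} are in hand.
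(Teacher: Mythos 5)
Your plan follows essentially the same route as the paper: anti-discretize the measure integral against the discretizing sequence $\{x_k\}$ for $\vp$ (the paper cites \cite[Corollary~2.13]{gp1} for this step; you propose to re-derive it from \eqref{Fubini.3} and Lemma~\ref{lem.2.3}), split $\int_{\dual B(0,y)}h$ at the sphere of radius $x_k$, use $\|1\|_{p,u,I_k}^p\ap U(x_k)$, and collapse $\int_{S[x_k,\infty)}h=\sum_{m\ge k}\int_{S[x_m,x_{m+1})}h$ via the geometrically-increasing part of Lemma~\ref{lem.2.3}. The one substantive difference is the order of two reductions. The paper first localizes the $u$-integral $\|\int_{\dual B(0,\cdot)}h\|_{p,u,(0,x_k)}\rightsquigarrow\|\int_{\dual B(0,\cdot)}h\|_{p,u,I_k}$ by Lemma~\ref{lem.2.3} (the summands $\|\int_{\dual B(0,\cdot)}h\|_{p,u,I_j}^p$ do not depend on $k$, so the partial-sum collapse applies directly), and only then decomposes $\int_{\dual B(0,y)}h$ for $y\in I_k$. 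You decompose first and then claim the Lemma~\ref{lem.2.3} collapse ``against the partial-sum structure in $j$'' yields \emph{precisely} the first term. That is not a direct application: after decomposing, the summand $\|\int_{S[\cdot,x_k)}h\|_{p,u,I_j}^p$ depends on $k$ as well as $j$, so Lemma~\ref{lem.2.3} cannot be invoked verbatim. One can repair this by enlarging $S[y,x_k)$ to $S[y,\infty)$ for the upper bound, collapsing, and then re-splitting on $I_k$ --- at which point you are doing exactly what the paper does, just later. The paper's ordering avoids this detour, which is why it is the cleaner arrangement.

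A few smaller slips you should fix before writing this up. First, $\vp(x_k)^{1/q}U(x_k)^{-1/p}=(\vp(x_k)U(x_k)^{-q/p})^{1/q}$ is geometrically \emph{decreasing} along the discretizing sequence, not increasing; since the partial-sum part of Lemma~\ref{lem.2.3} requires a geometrically decreasing weight, your mechanism is correct but the stated monotonicity is backwards. Second, the intermediate claim that the anti-discretization produces $\sum_k\vp(x_k)G(x_k)^{q/p}$, ``equivalently $\sum_k\vp(x_k)U(x_k)^{q/p}F(x_k)^{q/p}$,'' is off by a factor of $U(x_k)^{q/p}$: it should be $\sum_k\vp(x_k)U(x_k)^{-q/p}G(x_k)^{q/p}=\sum_k\vp(x_k)F(x_k)^{q/p}$, which is what your subsequent display correctly records. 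Third, the notation $\big\|\int_0^{x_k}(\cdot)\big\|_\infty^{1/p}$ applied to a number is not meaningful; you want $\big(\int_0^{x_k}(\int_{\dual B(0,y)}h)^pu(y)\,dy\big)^{1/p}=\big\|\int_{\dual B(0,\cdot)}h\big\|_{p,u,(0,x_k)}$. Also, $F=G/U$ is in fact equivalent to a non-increasing function of $t$ (it is a weighted average of the non-increasing quantity $(\int_{\dual B(0,y)}h)^p$ over $(0,t)$), which is precisely why $G$ is $U$-quasiconcave; saying $F$ is ``roughly $U^{-1}$ times a non-decreasing function'' undersells the structure the anti-discretization step actually uses.

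Overall: the architecture of your argument coincides with the paper's, and the identified slips are reparable, but the $T_1$-term treatment as written is too loose to pass directly.
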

\begin{proof}
Applying \cite[Corollary 2.13]{gp1} to the $U$-quasiconcave function
$$
f(t)=\int_0^t\left(\int_{\dual B(0,y)}h(z)dz\right)^p u(y)dy,
$$
we get that
\begin{equation*}\label{eq134-6813=0968}
\lhs \eqref{eq.4.1} \ap \left\|\left\{\left\|\int_{\dual B(0,y)} h(z)dz\right\|_{p,u,(0,x_k)}\frac{\vp(x_k)^{{1} / {q}}}{U(x_k)^{{1} / {p}}}\right\}\right\|_{\ell^q(\Z)}.
\end{equation*}	
Using Lemma \ref{lem.2.3},
\begin{align*}
\lhs \eqref{eq.4.1}  \ap &
\left\|\left\{\left\|\int_{S[y,\infty)} h(z)dz\right\|_{p,u,I_k} \frac{\vp(x_k)^{{1} / {q}}}{U(x_k)^{{1} / {p}}}\right\}\right\|_{\ell^q(\Z)}
\\
\ap &
\left\|\left\{\left\|\int_{S[y,{x_k})} h(z)dz + \int_{S[x_k,\infty)} h(z)dz\right\|_{p,u,I_k} \frac{\vp(x_k)^{{1} / {q}}}{U(x_k)^{{1} / {p}}}\right\}\right\|_{\ell^q(\Z)}
\\
\ap &
\left\|\left\{\left\|\int_{S[y,x_k)}h(z)dz\right\|_{p,u,I_k}\frac{\vp(x_k)^{{1} / {q}}}{U(x_k)^{{1} / {p}}}\right\}\right\|_{\ell^q(\Z)} + \left\|\left\{\left\|\int_{S[x_k,\infty)} h(z)dz\right\|_{p,u,I_k}\frac{\vp(x_k)^{{1} / {q}}}{U(x_k)^{{1} / {p}}}\right\}\right\|_{\ell^q(\Z)},
\end{align*}
where $I_k : = [x_{k-1},x_k)$, $k \in \Z$. Since $\|1\|_{p,u,I_k}^p	\ap U(x_k)$,
we obtain that
\begin{align*}
\lhs \eqref{eq.4.1}  \ap \left\|\left\{\left\|\int_{S[y,x_k)} h(z)dz\right\|_{p,u,I_k}\frac{\vp(x_k)^{{1} / {q}}}{U(x_k)^{{1} / {p}}}\right\}\right\|_{\ell^q(\Z)} + \left\|\left\{\vp(x_k)^{{1} / {q}}\int_{S[x_k,\infty)} h(z)dz \right\}\right\|_{\ell^q(\Z)}.
\end{align*}
By using Lemma \ref{lem.2.3} on the second term, we arrive at
\begin{align*}
\lhs \eqref{eq.4.1}  \, \ap
\left\|\left\{\left\|\int_{S[y,x_k)}h(z)dz\right\|_{p,u,I_k}\frac{\vp(x_k)^{{1} / {q}}}{U(x_k)^{{1} / {p}}}\right\}\right\|_{\ell^q(\Z)} + \left\|\left\{\vp(x_k)^{{1} / {q}}\int_{S[x_k,x_{k+1})} h(z)dz \right\}\right\|_{\ell^q(\Z)}.
\end{align*}	
\end{proof}

\begin{lem}\label{Lem3.1}
Let $0 < p,q < \infty$, $1\le \t \le \infty$, $1 / \rho = (1 / q - 1 / \t)_+$, and let $u,\,w\in {\mathcal W}\I$ and $v\in {\mathcal W}(\rn)$ be such that $U$ is admissible and $V_{\theta}(t) < \infty$, $t \in \I$ with $\lim_{t\rightarrow \infty} V_{\theta} (t) = 0$. Suppose that non-negative Borel measure $\mu$ on $[0,\infty)$ is non-degenerate with respect to $U^{{q} / {p}}$. Let $\{x_k\}$ be any discretizing sequence for the fundamental function $\vp$ of the measure $\mu$ with respect to $U^{{q} / {p}}$. Then inequality \eqref{eq.4.1} holds  for every $h \in \mp^+(\rn)$ if and only if
\begin{equation}\label{A}
A : = \left\| \left\{
\frac{\vp(x_k)^{{1} / {q}}} {U(x_k)^{{1} / {p}}} B(x_{k-1},x_k) \right\}
\right\|_{\ell^{\rho}(\Z)} + \left\| \left\{\vp(x_k)^{{1} / {q}}C(x_k,x_{k+1})\right\} \right\|_{\ell^{\rho}(\Z)} < \infty,
\end{equation}
where  
\begin{align}
B(x_{k-1},x_k) : & = \sup_{h\in \mp^+(S[x_{k-1},x_k))} \left\|\int_{S[t,x_k)}
h(z)dz\right\|_{p,u,[x_{k-1},x_{k})} \, / \, \left\|h\right\|_{\t,v,S[x_{k-1},x_k)}, \label{Bk} \\
\intertext{and}
C(x_k,x_{k+1}) : & = \sup_{h\in \mp^+(S[x_k,x_{k+1}))} \|h\|_{1,S[x_k,x_{k+1})} \, / \, \|h\|_{\t,v,S[x_k,x_{k+1})}. \label{defi.C} 
\end{align}
Moreover, the best constant in inequality \eqref{eq.4.1} satisfies $ c\ap A$.
\end{lem}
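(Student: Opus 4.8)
The plan is to combine the discretization of $\lhs\eqref{eq.4.1}$ from Lemma \ref{Lem3.0} with a discretization of the right-hand side $\|h\|_{\t,v,\rn}$ over the same annuli $S[x_{k-1},x_k)$, and then to invoke the discrete Landau resonance theorem (Proposition \ref{prop.2.1}) together with the discrete H\"older inequality \eqref{discrete.Hold.}. First I would observe that, since the functionals $B(x_{k-1},x_k)$ and $C(x_k,x_{k+1})$ depend only on the restriction of $h$ to the single annulus $S[x_{k-1},x_k)$ (resp. $S[x_k,x_{k+1})$), the problem essentially decouples: writing $h_k := h \chi_{S[x_{k-1},x_k)}$ we have $\|h\|_{\t,v,\rn} = \|\{\|h_k\|_{\t,v,S[x_{k-1},x_k)}\}\|_{\ell^{\t}(\Z)}$, and by Lemma \ref{Lem3.0} the left-hand side is equivalent to a sum of two $\ell^q(\Z)$-norms, each of which, by the very definitions \eqref{Bk} and \eqref{defi.C}, is bounded by $\|\{ \frac{\vp(x_k)^{1/q}}{U(x_k)^{1/p}} B(x_{k-1},x_k) \|h_k\|_{\t,v,S[x_{k-1},x_k)} \}\|_{\ell^q(\Z)}$ and $\|\{ \vp(x_k)^{1/q} C(x_k,x_{k+1}) \|h_{k+1}\|_{\t,v,S[x_k,x_{k+1})} \}\|_{\ell^q(\Z)}$, respectively, and these are sharp when the single-annulus suprema are (nearly) attained.

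For the sufficiency direction ($A < \infty \Rightarrow$ \eqref{eq.4.1}), I would take an arbitrary $h \in \mp^+(\rn)$, apply Lemma \ref{Lem3.0}, bound each annular integral $\|\int_{S[t,x_k)} h(z)dz\|_{p,u,I_k}$ by $B(x_{k-1},x_k)\|h\|_{\t,v,S[x_{k-1},x_k)}$ and $\int_{S[x_k,x_{k+1})}h(z)dz$ by $C(x_k,x_{k+1})\|h\|_{\t,v,S[x_k,x_{k+1})}$, and then apply \eqref{discrete.Hold.} with exponents $\rho$ and $\t$ (note $1/q = 1/\rho + 1/\t$ when $q \le \t$, and $\rho = \infty$, $1/q \ge 1/\t$ otherwise, so a trivial embedding $\ell^\t \hookrightarrow \ell^q$ handles the remaining case) to factor out $\|\{\|h_k\|_{\t,v,\cdot}\}\|_{\ell^\t(\Z)} \ap \|h\|_{\t,v,\rn}$, leaving exactly $A$ as the constant. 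A minor point requiring care is that the two sums in Lemma \ref{Lem3.0} involve the index shift $k \leftrightarrow k+1$ in the second term, which is harmless after reindexing, and that the $\ell^\t(\Z)$-norm of the disjointly-supported pieces reconstructs $\|h\|_{\t,v,\rn}$ up to a constant (with the usual $\einf$ modification when $\t = \infty$).

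For the necessity direction, I would test \eqref{eq.4.1} with functions supported in a single annulus. Given $\varepsilon > 0$ and a fixed $k$, choose $h$ supported on $S[x_{k-1},x_k)$ that nearly attains $B(x_{k-1},x_k)$; feeding it into \eqref{eq.4.1} and using the lower bound from Lemma \ref{Lem3.0} shows $\frac{\vp(x_k)^{1/q}}{U(x_k)^{1/p}} B(x_{k-1},x_k) \ls c$ for each $k$, i.e. the $\ell^\infty$ bound on the first sequence, and similarly $\vp(x_k)^{1/q} C(x_k,x_{k+1}) \ls c$. To upgrade from $\ell^\infty$ to $\ell^\rho$ I would instead superpose the extremals: take $h = \sum_k \lambda_k h^{(k)}$ with $h^{(k)}$ a near-extremizer on the $k$-th annulus normalized so that $\|h^{(k)}\|_{\t,v,\cdot} = 1$; then the disjoint supports give $\|h\|_{\t,v,\rn} \ap \|\{\lambda_k\}\|_{\ell^\t(\Z)}$ while, via Lemma \ref{Lem3.0}, the left-hand side dominates $\|\{\lambda_k \frac{\vp(x_k)^{1/q}}{U(x_k)^{1/p}} B(x_{k-1},x_k)\}\|_{\ell^q(\Z)}$ and the analogous $C$-term; the arbitrariness of $\{\lambda_k\} \in \ell^\t$ then forces, by Proposition \ref{prop.2.1} (Landau resonance), the $\ell^\rho(\Z)$-bound with $1/\rho = (1/q - 1/\t)_+$, which is exactly $A \ls c$.

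The main obstacle I expect is handling the interaction between the two terms in the discretized left-hand side and the overlapping annuli: an extremizer concentrated near $x_k$ contributes simultaneously to the $B$-term at index $k$ and, through the tail $\int_{S[x_k,\infty)}$, potentially to $C$-terms and $B$-terms at other indices, so one must check that choosing near-extremizers on \emph{alternate} annuli (using, say, the decomposition $\Z = \Z_1 \cup \Z_2$ already available, or simply even/odd indices) decouples the contributions well enough that the resonance argument applies cleanly to each of the two sequences separately; this is where the geometric growth of $\{U(x_k)\}$ and the quasiconcavity properties recorded in the preceding Remark must be used to absorb cross terms. The degenerate endpoint cases $\t = \infty$ (where $\rho$ may be finite or $\ell^\infty$) and the boundary between $q \le \t$ and $q > \t$ also need to be dispatched, but these are routine once the main superposition argument is in place.
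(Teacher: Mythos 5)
Your proposal is correct and follows exactly the route the paper takes: sufficiency via the definitions of $B$, $C$ plus the discrete H\"older inequality \eqref{discrete.Hold.} applied to the two sums from Lemma~\ref{Lem3.0}; necessity via superposition of near-extremizers $h=\sum_m a_m h_m$ (resp.\ $\sum_m b_m\psi_m$) on the disjoint annuli, followed by Proposition~\ref{prop.2.1} (Landau resonance) to upgrade from the resulting $\ell^q$--$\ell^\t$ embedding to the $\ell^\rho$ bound.

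The ``main obstacle'' you anticipate in the last paragraph is, however, not there. In the discretized form produced by Lemma~\ref{Lem3.0} the first term involves $\int_{S[y,x_k)}h$ with $y\in I_k=[x_{k-1},x_k)$, and since $S[y,x_k)\subset S[x_{k-1},x_k)$ for such $y$, this term depends only on $h\chi_{S[x_{k-1},x_k)}$; the second term $\int_{S[x_k,x_{k+1})}h$ likewise depends only on $h\chi_{S[x_k,x_{k+1})}$. The intermediate tail $\int_{S[x_k,\infty)}$ you worry about appears only inside the proof of Lemma~\ref{Lem3.0} and has already been replaced by the localized $\int_{S[x_k,x_{k+1})}$ in its conclusion. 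Consequently, testing with $\sum_m a_m h_m$ ($h_m$ supported in $S[x_{m-1},x_m)$, normalized in $\ell^\t$) makes the first $\ell^q$-sum reduce exactly to $\|\{a_k B(x_{k-1},x_k)\vp(x_k)^{1/q}U(x_k)^{-1/p}\}\|_{\ell^q}$ with no cross terms, and the $C$-sequence is handled by a second, independent test family; no ``alternate annuli'' or even/odd splitting is needed, and one never has to absorb cross terms. With that clarification the sketch matches the paper's proof essentially verbatim.
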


\begin{proof}
{\textbf{Sufficiency}.} 
In view of \eqref{Bk} and inequality \eqref{discrete.Hold.}, we have that
\begin{align}\label{I}
\left\|\left\{\left\|\int_{S[y,x_k)}h(z)dz\right\|_{p,u,I_k}\frac{\vp(x_k)^{{1} / {q}}}{U(x_k)^{{1} / {p}}}\right\}\right\|_{\ell^q(\Z)}  & \le \left\|\left\{B(x_{k-1},x_k)
\frac{\vp(x_k)^{{1} / {q}}}{U(x_k)^{{1} / {p}}}\|h\|_{\t,v,S[x_{k-1},x_k)}\right\}\right\|_{\ell^q(\Z)} \notag\\
& \le \left\|\left\{B(x_{k-1},x_k)
\frac{\vp(x_k)^{{1} / {q}}}{U(x_k)^{{1} / {p}}}\right\}\right\|_{\ell^{\rho}(\Z)}
\left\|\{\|h\|_{\t,v,S[x_{k-1},x_k)}\}\right\|_{\ell^{\t}(\Z)} \nonumber\\
& = \left\|\left\{B(x_{k-1},x_k) \frac{\vp(x_k)^{{1} / {q}}}{U(x_k)^{{1} / {p}}}\right\}\right\|_{\ell^{\rho}(\Z)} \|h\|_{\t,v,\rn}.
\end{align}
By \eqref{defi.C} and \eqref{discrete.Hold.}, we get that
\begin{align}
\left\|\left\{\vp(x_k)^{{1} / {q}}\int_{S[x_k,x_{k+1})} h(z)dz
\right\}\right\|_{\ell^q(\Z)} & \leq \left\|\left\{\vp(x_k)^{{1} / {q}} C(x_k,x_{k+1}) \|h\|_{\t,v,S[x_k,x_{k+1})}\right\}\right\|_{\ell^q(\Z)} \nonumber\\
& \lesssim
\left\|\left\{\vp(x_k)^{{1} / {q}}C(x_k,x_{k+1})\right\}
\right\|_{\ell^{\rho}(\Z)}\|h\|_{\t,v,\rn}. \label{II}
\end{align}
By Lemma \ref{Lem3.0}, using \eqref{I} and \eqref{II}, we obtain 
\begin{align*}
\lhs \eqref{eq.4.1} & \lesssim \left( \left\| \left\{\frac{\vp(x_k)^{{1} / {q}}} {U(x_k)^{{1} / {p}}} B(x_{k-1},x_k) \right\}
\right\|_{\ell^{\rho}(\Z)} + \left\| \left\{\vp(x_k)^{{1} / {q}}C(x_k,x_{k+1})\right\}
\right\|_{\ell^{\rho}(\Z)} \right) \|h\|_{\t,v,\rn} = A \, \|h\|_{\t,v,\rn}.
\end{align*}
Consequently, \eqref{eq.4.1} holds provided that $A<\infty$ and $c \le A$.

{\textbf{Necessity}.} Assume that inequality
\eqref{eq.4.1} holds with $c < \infty$. By \eqref{Bk}, there are $h_k \in {\mathfrak M}^+(\rn)$, $k\in \Z$, such that $\supp h_k \subset S[x_{k-1},x_k)$,
\begin{equation}\label{hknorm}
\|h_k\|_{\t,v,S[x_{k-1},x_k)} = 1 \quad \mbox{and} \quad 
\left\|\int_{S[y,x_k)} h_k(z)dz\right\|_{p,u,I_k} \ge \frac{1}{2} B(x_{k-1},x_k) \qquad \mbox{for
all}\qquad k\in\Z.
\end{equation}
Define
\begin{equation}\label{g}
h = \sum_{m\in \Z} a_m h_m,
\end{equation}
where $\{a_k\}_{k\in\Z}$ is any sequence of positive numbers. Then, by Lemma \ref{Lem3.0}, we have that
\begin{align}
\lhs \eqref{eq.4.1}  \gs
\left\|\left\{\left\|\int_{S[y,x_k)} \sum_{m\in \Z} a_m h_m \right\|_{p,u,I_k} \frac{\vp(x_k)^{{1} / {q}}}{U(x_k)^{{1} / {p}}} \right\}\right\|_{\ell^q(\Z)}
\gs \left\|\left\{a_k B(x_{k-1},x_k)\frac{\vp(x_k)^{{1} / {q}}}{U(x_k)^{{1} / {p}}}\right\}\right\|_{\ell^q(\Z)}.
\label{nec1}
\end{align}
Moreover,
\begin{align}\label{nec2}
\rhs \eqref{eq.4.1}  = c\left\|\sum_{m\in \Z} a_m
h_m\right\|_{\t,v,\rn} = c\left\|\left\{a_k\right\}\right\|_{\ell^{\t}(\Z)}.
\end{align}
By \eqref{eq.4.1}, \eqref{nec1} and \eqref{nec2}, we
obtain that
\begin{equation}
\left\|\left\{a_k
B(x_{k-1},x_k)\frac{\vp(x_k)^{{1} / {q}}}{U(x_k)^{{1} / {p}}}\right\}\right\|_{\ell^q(\Z)}
\ls c \left\|\left\{a_k\right\}\right\|_{\ell^{\t}(\Z)}.
\end{equation}
Then, by Proposition \ref{prop.2.1} we arrive at
\begin{equation}\label{eq25-609820586y}
\left\| \left\{
\frac{\vp(x_k)^{{1} / {q}}}{U(x_k)^{{1} / {p}}}B(x_{k-1},x_k)\right\} \right\|_{\ell^{\rho}(\Z)} \lesssim c.
\end{equation}

On the other hand, by \eqref{defi.C}, there are $\psi_k \in {\mathfrak M}^+(\rn)$, $k\in \Z$,
such that $\supp \psi_k \subset S[x_k,x_{k+1})$,
\begin{equation}\label{eq13487561134551}
\|\psi_k\|_{\t,v,S[x_k,x_{k+1})} = 1
\quad \mbox{and} \quad \|\psi_k\|_{1,S[x_k,x_{k+1})} \ge  \frac{1}{2} C(x_k,x_{k+1}) \qquad
\mbox{for all}\qquad k\in\Z.
\end{equation}
Define
\begin{equation}\label{f}
h = \sum_{m\in \Z} b_m \psi_m,
\end{equation}
where $\{b_k\}_{k\in\Z}$ is any sequence of positive numbers. Then, by Lemma \ref{Lem3.0}, we have that
\begin{align*}
\lhs \eqref{eq.4.1} \gs
\left\|\left\{\vp(x_k)^{{1} / {q}}\int_{S[x_k,x_{k+1})} \sum_{m\in \Z} b_m \psi_m \right\}\right\|_{\ell^q(\Z)}
\gs \left\|\left\{b_k\vp(x_k)^{{1} / {q}}C(x_k,x_{k+1})\right\}\right\|_{\ell^q(\Z)}.
\end{align*}
We also have,
\begin{align*}
\rhs \eqref{eq.4.1}  = c \left\|\sum_{m\in \Z} b_m
\psi_m\right\|_{\t,v,\rn} = c
\left\|\left\{b_k\right\}\right\|_{\ell^{\t}(\Z)}.
\end{align*}
Consequently
$$
\left\|\left\{b_k
\vp(x_k)^{{1} / {q}}C(x_k,x_{k+1})\right\}\right\|_{\ell^q(\Z)}
\ls c \left\|\left\{b_k\right\}\right\|_{\ell^{\t}(\Z)}.
$$
Then, applying Proposition \ref{prop.2.1}, we get that
\begin{equation}\label{eq205689728967}
\left\|\left\{\vp(x_k)^{{1} / {q}}C(x_k,x_{k+1})\right\}\right\|_{\ell^{\rho}(\Z)} \lesssim c.
\end{equation}
Combining \eqref{eq25-609820586y} and \eqref{eq205689728967}, we arrive at $A \ls c$.
\end{proof}

\begin{rem}\label{remconv}
Let $1 \le \theta \le \infty$. Note that
\begin{equation}\label{Ck}
C(x_{k},x_{k+1}) = \left\{\begin{array}{ll}
\big\|v^{-{1} / {\t}}\big\|_{\t',S[x_k,x_{k+1})}, & ~~ \mbox{when}\,\, \t <\infty, \vspace{0.1cm}\\
\big\|v^{-1}\big\|_{1,S[x_k,x_{k+1})}, & ~~ \mbox{when}\,\,
\t=\infty,
\end{array}
\right. \qquad k \in \Z.
\end{equation}	
If $\theta < \infty$, in view of Lemma \ref{lem.2.3}, it is evident
that
\begin{align*}
\left\|\left\{\vp(x_k)^{{1} / {q}}C(x_k,x_{k+1})\right\}
\right\|_{\ell^{\rho}(\Z)} & = \left\| \left\{
\vp(x_k)^{{1} / {q}} \big \|v^{-{1} / {\t}} \big\|_{\t',S[x_k,x_{k+1})}\right\}
\right\|_{\ell^{\rho}(\Z)} \\
& \ap \left\| \left\{
\vp(x_k)^{{1} / {q}} \big \|v^{-{1} / {\t}} \big\|_{\t',S[x_k,\infty)}\right\}
\right\|_{\ell^{\rho}(\Z)}.
\end{align*}
Monotonicity of $\big\|v^{-{1} / {\t}} \big\|_{\t',S[t,\infty)}$ implies that
\begin{align*}
\left\| \left\{ \vp(x_k)^{{1} / {q}}
\big \|v^{-{1} / {\t}} \big\|_{\t',S[x_k,\infty)}\right\} \right\|_{\ell^{\rho}(\Z)}  \ge
\left\| \left\{ \vp(x_k)^{{1} / {q}} \right\}
\right\|_{\ell^{\rho}(\Z)} \lim_{t\rightarrow
\infty}\big\|v^{-{1} / {\t}} \big\|_{\t',S[t,\infty)}.
\end{align*}
Since $\left\{ \vp(x_k)^{{1} / {q}} \right\}$ is geometrically
increasing, we obtain that
$$
\left\| \left\{ \vp(x_k)^{{1} / {q}}
\big\|v^{-{1} / {\t}} \big\|_{\t',S[x_k,\infty)}\right\} \right\|_{\ell^{\rho}(\Z)}  \ge
\vp(\infty)^{{1} / {q}} \lim_{t\rightarrow \infty}\big\|v^{-{1} / {\t}} \big\|_{\t',S[t,\infty)}.
$$
This inequality shows that $\lim_{t\rightarrow \infty} \|v^{-{1} / {\t}} \|_{\t',S[t,\infty)}$ must be equal to $0$, because $\vp(\infty)$  is
always equal to $\infty$ by our assumptions on the function $\vp$. 

Similarly, $\lim_{t\rightarrow \infty} \|v^{-1} \|_{1,S[t,\infty)}$ must be equal to $0$, when $\theta = \infty$.

Therefore, throughout the paper we consider weight functions $v$ such that $\lim_{t\rightarrow \infty} V_{\t}(t) = 0$. 

Note also that the condition $V_{\theta} (t) < \infty$, $t \in \I$ implies $\lim_{t\rightarrow \infty} V_{\t}(t) = 0$, when $1 < \theta \le \infty$.
\end{rem}


\section{Anti-discretization of conditions}\label{Antidiscretization}

In this section we anti-discretize the conditions obtained in Lemma
\ref{Lem3.1}.

\begin{lem}\label{Lem4.2}
Let $0 < p,\, q < \infty$, $1\le \t \le \infty$, $1 / \rho =\left(1/q-1/\t\right)_+$ and let $u,\,w\in {\mathcal W}\I$ and $v\in {\mathcal W}(\rn)$ be such that $U$ is admissible and $V_{\theta}(t) < \infty$, $t \in \I$ with $\lim_{t\rightarrow \infty} V_{\theta} (t) = 0$. Suppose that non-negative Borel measure $\mu$ on $[0,\infty)$ is non-degenerate with respect to $U^{{q} / {p}}$. Let $\{x_k\}$ be any discretizing sequence for the fundamental function $\vp$ of the measure $\mu$ with respect to $U^{{q} / {p}}$. 

{\rm (a)} If $\t \le p$, then $A\ap A^*$, where
\begin{align*}
A^*:=\left\| \left\{ \vp(x_k)^{{1} / {q}} \bigg(\sup_{t\in(0,\infty)} U(t,x_k)^{{1} / {p}} V_{\theta} (t) \bigg)\right\}\right\|_{\ell^{\rho}(\Z)}.
\end{align*}

{\rm (b)} If $p < \theta$ and $1 /r = 1 / p - 1 / \t $, then	
$$
A  \ap B^*,
$$
where
\begin{align*}
B^* : =  \left\| \left\{ \vp(x_k)^{{1} / {q}} \left(\int_{[0,\infty)}
{\mathcal U}(t,x_k)^{{r} / {p}} d \left( - V_{\theta}(t-)^r\right) \right)^{{1} / {r}}\right\}
\right\|_{\ell^{\rho}(\Z)}.
\end{align*}
Here
$$
V_{\theta}(t-) : = \lim_{\tau \rw t-} V_{\theta}(\tau).
$$
\end{lem}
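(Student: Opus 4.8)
The plan is to anti-discretize the two sums appearing in the quantity $A$ from Lemma \ref{Lem3.1} separately, recognizing that the second sum $\left\| \left\{\vp(x_k)^{{1} / {q}}C(x_k,x_{k+1})\right\} \right\|_{\ell^{\rho}(\Z)}$ is, by Remark \ref{remconv} and Lemma \ref{lem.2.3}, equivalent to $\left\| \left\{\vp(x_k)^{{1} / {q}} V_{\theta}(x_k) \right\} \right\|_{\ell^{\rho}(\Z)}$ in both parameter regimes; in case (a) this term is dominated by $A^*$ (take the supremum over $t \in (0,x_k)$ of $U(t,x_k)^{1/p} V_\theta(t) \approx U(x_k)^{1/p} V_\theta(t)$, noting $U(t,x_k) \to 1$ but more precisely one keeps $t$ near $x_k$), while in case (b) it is dominated by $B^*$ by restricting the integral defining $B^*$ to a neighborhood of the endpoint or using $V_\theta(x_k) \lesssim (\int {\mathcal U}(t,x_k)^{r/p}\, d(-V_\theta(t-)^r))^{1/r}$ via monotonicity. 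So the core of the proof is to show that the first sum, built from the local ``block'' constants $B(x_{k-1},x_k)$, is equivalent to $A^*$ (resp. $B^*$).

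The key step is therefore a sharp two-sided estimate for each block constant $B(x_{k-1},x_k)$ defined in \eqref{Bk}. This is a one-dimensional-in-radius weighted Hardy-type inequality on the annulus $S[x_{k-1},x_k)$: after passing to polar-type radial coordinates, $\left\|\int_{S[t,x_k)} h\right\|_{p,u,[x_{k-1},x_k)}$ is an ordinary weighted Hardy operator, and its norm as a map from $L^\theta(v,S[x_{k-1},x_k))$ is governed by the classical Hardy inequality characterizations quoted from \cite{ChristGraf,DrabHeinKuf,mu.emb}. In the case $\theta \le p$ this gives $B(x_{k-1},x_k) \approx \sup_{t \in (x_{k-1},x_k)} \|1\|_{p,u,(t,x_k)}\, \|v^{-1/\theta}\|_{\theta',S[x_{k-1},t)}$, which after using $\|1\|_{p,u,(t,x_k)}^p \approx U(x_k) - U(t) \approx U(t,x_k)^{-1} U(x_k) \cdot (\text{stuff})$ — more precisely $U(x_k)-U(t)$ is comparable on the block to $U(x_k) \min\{1, (U(x_k)-U(t))/U(x_k)\}$ — must be matched against the factor $U(x_k)^{-1/p}$ in front and the supremum over $t\in(0,\infty)$ in $A^*$. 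The point is that the extra parts of the supremum in $A^*$ (over $t \notin [x_{k-1},x_k)$) are absorbed after summing in $k$ because of the geometric behavior $\vp(x_k)\uu$, $\vp(x_k)U(x_k)^{-q/p}\dd$ together with the monotonicity of $V_\theta$: a supremum over all $t$ splits into the ``$t$ inside block $j\le k$'' pieces and ``$t$ inside block $j \ge k$'' pieces, and Lemma \ref{lem.2.3} collapses each geometric sum back to the diagonal. In the case $p < \theta$ one uses instead the dual/Hardy characterization producing an integral condition $B(x_{k-1},x_k)^r \approx \int \|1\|_{p,u,(t,x_k)}^r \, d(-V_\theta(t-)^r)$ over $t \in S[x_{k-1},x_k)$, and the same geometric-summation mechanism identifies $\sum_k (\vp(x_k)^{1/q} U(x_k)^{-1/p} B(x_{k-1},x_k))^\rho$ with $(B^*)^\rho$.

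Concretely I would proceed as follows. First, reduce $B(x_{k-1},x_k)$ to a radial Hardy operator norm and invoke the classical weighted Hardy inequality to get, in case (a),
\[
B(x_{k-1},x_k) \approx \sup_{t \in [x_{k-1},x_k)} \big(U(x_k)-U(t)\big)^{1/p}\, V_\theta\text{-increment}(x_{k-1},t),
\]
and in case (b) the corresponding integral expression. Second, insert the prefactor $\vp(x_k)^{1/q} U(x_k)^{-1/p}$ and show $\frac{(U(x_k)-U(t))^{1/p}}{U(x_k)^{1/p}} \approx U(t,x_k)^{1/p}\cdot(\text{bounded above by }1\text{, and }\approx 1\text{ for }t\text{ near }x_k)$, so that the block quantity is $\lesssim \vp(x_k)^{1/q}\sup_{t}U(t,x_k)^{1/p}V_\theta(t)$, giving $A \lesssim A^*$ (resp. $\lesssim B^*$) immediately. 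Third — and this is the main obstacle — prove the reverse inequality $A^* \lesssim A$ (resp. $B^* \lesssim A$): here one must show that for each $k$ and each $t\in(0,\infty)$ the term $\vp(x_k)^{1/q} U(t,x_k)^{1/p} V_\theta(t)$ is controlled, after taking $\ell^\rho$-norms in $k$, by the diagonal sum in $A$; one distinguishes whether $t$ lies in a block $[x_{j-1},x_j)$ with $j \le k$ or $j > k$, uses $U(t,x_k) \approx \min\{1, U(x_k)/U(x_j)\}$ in the first case and $\approx 1$ with $V_\theta(t)\le V_\theta(x_{j-1})$ in the second, and then applies the geometric-sequence reductions of Lemma \ref{lem.2.3} twice (once for $j\le k$ using that $\{\vp(x_k)^{1/q}\}$ or $\{\vp(x_k)^{1/q}U(x_k)^{-1/p}\}$ is geometrically monotone, once for $j>k$), absorbing everything into $\left\|\left\{\frac{\vp(x_k)^{1/q}}{U(x_k)^{1/p}} B(x_{k-1},x_k)\right\}\right\|_{\ell^\rho(\Z)} + \left\|\left\{\vp(x_k)^{1/q} V_\theta(x_k)\right\}\right\|_{\ell^\rho(\Z)}$. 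Throughout, the splitting $\Z = \Z_1 \cup \Z_2$ from the discretizing sequence is used to replace $\vp(x_k)$ by $\vp(t)$ or $\vp(x_k)U(x_k)^{-q/p}$ by $\vp(t)U(t)^{-q/p}$ on the relevant blocks, which is exactly what is needed to match the continuous expressions $A^*,B^*$ with their discrete counterparts. The routine interchange of integrals and standard Hardy-inequality bookkeeping I would omit or compress.
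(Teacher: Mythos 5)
Your high-level plan is the right one, and indeed matches the paper: use \cite{mu.emb} to get explicit Hardy-type characterizations of the block constants $B(x_{k-1},x_k)$ and then absorb the off-diagonal contributions via the geometric behaviour of $\vp(x_k)$ and $\vp(x_k)U(x_k)^{-q/p}$ together with Lemma~\ref{lem.2.3} and the $\Z=\Z_1\cup\Z_2$ splitting. However, there is a genuine error in your identification of the block constant in case (a), and it is fatal to the argument as written.

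The operator inside $B(x_{k-1},x_k)$ is $y\mapsto \int_{S[y,x_k)}h$, i.e.\ after radial averaging $y\mapsto\int_y^{x_k}\tilde h(s)\,ds$. This is the \emph{adjoint} (backward) Hardy operator on the interval $(x_{k-1},x_k)$, and for $\theta\le p$ its norm from $L^\theta(\tilde v)$ to $L^p(u)$ is
$$
B(x_{k-1},x_k)\approx \sup_{t\in I_k}\Big(\int_{x_{k-1}}^{t}u\Big)^{1/p}\big\|v^{-1/\theta}\big\|_{\theta',S[t,x_k)},
$$
where the weighted measure $u$ accumulates to the \emph{left} of $t$ and the dual $v$-norm sits on the \emph{outer} annulus $S[t,x_k)$. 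You instead wrote $B(x_{k-1},x_k)\approx\sup_{t}\|1\|_{p,u,(t,x_k)}\,\|v^{-1/\theta}\|_{\theta',S[x_{k-1},t)}$, which is the characterization of the \emph{forward} Hardy operator $y\mapsto\int_{x_{k-1}}^{y}\tilde h$. With your (swapped) formula, the quantity $(U(x_k)-U(t))^{1/p}$ is decreasing in $t$ and the $v$-norm is taken over the inner annulus $S[x_{k-1},t)$; neither of these can be matched to $A^*$, whose ingredients $\mathcal U(t,x_k)^{1/p}$ and $V_\theta(t)=\|v^{-1/\theta}\|_{\theta',S[t,\infty)}$ are respectively increasing in $t$ (for $t<x_k$) and supported on the outer region. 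You in fact run into exactly this mismatch when you try to argue that $(U(x_k)-U(t))/U(x_k)$ is ``comparable on the block'' to $\mathcal U(t,x_k)$ ``up to stuff''---these two expressions have opposite monotonicity in $t$ and are not comparable. With the correct block characterization the identification is instead immediate: $(\int_{x_{k-1}}^{t}u)^{1/p}\le U(t)^{1/p}$ and $\|v^{-1/\theta}\|_{\theta',S[t,x_k)}\le V_\theta(t)$ give the upper bound, while the reverse is recovered block by block via Lemma~\ref{lem.2.3} from the extra term $\|\{\vp(x_k)^{1/q}V_\theta(x_k)\}\|_{\ell^\rho}$. The same confusion propagates into your case (b), where you would obtain an integral in $d(\text{inner increments of }V_\theta)$ rather than in $d(-V_\theta(t-)^r)$, so the computation there would also fail to match $B^*$.

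Aside from this, your treatment of the second sum $\|\{\vp(x_k)^{1/q}C(x_k,x_{k+1})\}\|_{\ell^\rho}$ is essentially right (it reduces via \eqref{Ck}, Remark~\ref{remconv}, and Lemma~\ref{lem.2.3} to $\|\{\vp(x_k)^{1/q}V_\theta(x_k)\}\|_{\ell^\rho}$, which is the ``$t\ge x_k$'' half of the supremum defining $A^*$ or the ``$t\ge x_k$'' part of the integral defining $B^*$). But you should be aware that in case (b) the paper needs a genuine integration by parts on each block to pass between $\int(\int_{x_{k-1}}^t u)^{r/\theta}u(t)\|v^{-1/\theta}\|^r_{\theta',S[t,x_k)}\,dt$ and $\int U(t)^{r/p}\,d(-V_\theta(t-)^r)$, and the boundary terms generated there are precisely what is absorbed by the $C(x_k,x_{k+1})$ sum; a proof that ``compresses or omits the standard Hardy bookkeeping'' would skip this point, which is not routine.
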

\begin{proof}
(a) By \cite[Theorem 2.2, (a) and (f)]{mu.emb}, from Lemma \ref{Lem3.1}, we have that
\begin{align*}
A \ap & \left\| \left\{ \frac{\vp(x_k)^{{1} / {q}}}{U(x_k)^{{1} / {p}}} \sup_{t \in I_k} \bigg(\int_{x_{k-1}}^t u(s)ds\bigg)^{{1} / {p}} \big\|v^{-{1} / {\t}}\big\|_{\t',S[t,x_k)} \right\}\right\|_{\ell^{\rho}(\Z)} \\
& + \left\| \left\{ \vp(x_k)^{{1} / {q}} \big\|v^{-{1} / {\t}}\big\|_{\t',S[x_k,x_{k+1})} \right\}\right\|_{\ell^{\rho}(\Z)}.
\end{align*}
By Lemma \ref{lem.2.3}, we get that
\begin{align*}
A \lesssim & \left\| \left\{ \frac{\vp(x_k)^{{1} / {q}}}{U(x_k)^{{1} / {p}}} \sup_{t \in I_k} U(t)^{{1} / {p}} \big\|v^{-{1} / {\t}}\big\|_{\t',S[t,\infty)} \right\}\right\|_{\ell^{\rho}(\Z)} + \left\| \left\{ \vp(x_k)^{{1} / {q}} \big\|v^{-{1} / {\t}}\big\|_{\t',S[x_k,x_{k+1})} \right\}\right\|_{\ell^{\rho}(\Z)} \\
\approx & \left\| \left\{ \frac{\vp(x_k)^{{1} / {q}}}{U(x_k)^{{1} / {p}}} \sup_{t \in (0,x_k)} U(t)^{{1} / {p}} \big\|v^{-{1} / {\t}}\big\|_{\t',S[t,\infty)} \right\}\right\|_{\ell^{\rho}(\Z)} + \left\| \left\{ \vp(x_k)^{{1} / {q}} \big\|v^{-{1} / {\t}}\big\|_{\t',S[x_k,\infty)} \right\}\right\|_{\ell^{\rho}(\Z)} \\
= & \left\| \left\{ \frac{\vp(x_k)^{{1} / {q}}}{U(x_k)^{{1} / {p}}} \sup_{t \in (0,x_k)} U(t)^{{1} / {p}} \big\|v^{-{1} / {\t}}\big\|_{\t',S[t,\infty)} \right\}\right\|_{\ell^{\rho}(\Z)} + \left\| \left\{ \vp(x_k)^{{1} / {q}} \bigg(\sup_{t \in [x_k,\infty)} \big\|v^{-{1} / {\t}}\big\|_{\t',S[t,\infty)}\bigg) \right\}\right\|_{\ell^{\rho}(\Z)} \\
\ap & \left\| \left\{ \vp(x_k)^{{1} / {q}} \bigg(\sup_{t \in (0,\infty)} U(t,x_k)^{{1} / {p}} \big\|v^{-{1} / {\t}}\big\|_{\t',S[t,\infty)}\bigg) \right\}\right\|_{\ell^{\rho}(\Z)} = A^*.
\end{align*}

We now prove the reverse estimate. We have that
\begin{align*}
A^* \approx &  \left\| \left\{ \frac{\vp(x_k)^{{1} / {q}}}{U(x_k)^{{1} / {p}}} \sup_{t \in I_k} U(t)^{{1} / {p}} \big\|v^{- {1} / {\t}}\big\|_{\t',S[t,\infty)} \right\}\right\|_{\ell^{\rho}(\Z)} + \left\| \left\{ \vp(x_k)^{{1} / {q}}\big\|v^{-{1} / {\t}}\big\|_{\t',S[x_k,\infty)} \right\}\right\|_{\ell^{\rho}(\Z)} \\
\ls &  \left\|\left\{\frac{\vp(x_k)^{{1} / {q}}}{U(x_k)^{{1} / {p}}} \left(\sup_{t \in I_k}  \bigg(\int_{x_{k-1}}^{t} u(s)ds\bigg)^{{1} / {p}} \big\|v^{-{1} / {\t}}\big\|_{\t',S[t,\infty)}\right) \right\}\right\|_{\ell^{\rho}(\Z)} \\
& + \left\|\left\{\frac{\vp(x_k)^{{1} / {q}}}{U(x_k)^{{1} / {p}}} U(x_{k-1})^{{1} / {p}} \big\|v^{-{1} / {\t}}\big\|_{\t',S[x_{k-1},\infty)} \right\}\right\|_{\ell^{\rho}(\Z)} + \left\| \left\{ \vp(x_k)^{{1} / {q}}\big\|v^{-{1} / {\t}}\big\|_{\t',S[x_k,\infty)} \right\}\right\|_{\ell^{\rho}(\Z)} \\
\ls & \left\|\left\{\frac{\vp(x_k)^{{1} / {q}}}{U(x_k)^{{1} / {p}}} \left(\sup_{t \in I_k}  \bigg(\int_{x_{k-1}}^{t} u(s)ds\bigg)^{{1} / {p}} \big\|v^{-{1} / {\t}}\big\|_{\t',S[t,\infty)}\right) \right\}\right\|_{\ell^{\rho}(\Z)} \\
& + \left\|\left\{\vp(x_{k-1})^{{1} / {q}}  \big\|v^{-{1} / {\t}}\big\|_{\t',S[x_{k-1},\infty)} \right\}\right\|_{\ell^{\rho}(\Z)} + \left\|\left\{\vp(x_k)^{{1} / {q}}  \big\|v^{-{1} / {\t}}\big\|_{\t',S[x_k,\infty)} \right\}\right\|_{\ell^{\rho}(\Z)} \\
\approx & \left\|\left\{\frac{\vp(x_k)^{{1} / {q}}}{U(x_k)^{{1} / {p}}} \left(\sup_{t \in I_k}  \bigg(\int_{x_{k-1}}^{t} u(s)ds\bigg)^{{1} / {p}} \big\|v^{-{1} / {\t}}\big\|_{\t',S[t,\infty)}\right) \right\}\right\|_{\ell^{\rho}(\Z)} \\
& + \left\|\left\{\vp(x_k)^{{1} / {q}}  \big\|v^{-{1} / {\t}}\big\|_{\t',S[x_k,\infty)} \right\}\right\|_{\ell^{\rho}(\Z)}\\
\ls & \left\|\left\{\frac{\vp(x_k)^{{1} / {q}}}{U(x_k)^{{1} / {p}}} \left(\sup_{t \in I_k}  \bigg(\int_{x_{k-1}}^{t} u(s)ds\bigg)^{{1} / {p}} \big\|v^{-{1} / {\t}}\big\|_{\t',S[t,x_k)}\right) \right\}\right\|_{\ell^{\rho}(\Z)}\\
& + \left\|\left\{\vp(x_k)^{{1} / {q}}  \big\|v^{-{1} / {\t}}\big\|_{\t',S[x_k,\infty)} \right\}\right\|_{\ell^{\rho}(\Z)}\\
\ap & \left\|\left\{\frac{\vp(x_k)^{{1} / {q}}}{U(x_k)^{{1} / {p}}} \left(\sup_{t \in I_k}  \bigg(\int_{x_{k-1}}^{t} u(s)ds\bigg)^{{1} / {p}} \big\|v^{- {1} / {\t}}\big\|_{\t',S[t,x_k)}\right) \right\}\right\|_{\ell^{\rho}(\Z)}\\
& + \left\|\left\{\vp(x_k)^{{1} / {q}}  \big\|v^{-{1} / {\t}}\big\|_{\t',S[x_k,x_{k+1})} \right\}\right\|_{\ell^{\rho}(\Z)} = A.
\end{align*}

(b) Assume that $\theta < \infty$. By \cite[Theorem 2.2, (b) and (g)]{mu.emb}, and \eqref{Ck}, from Lemma \ref{Lem3.1}, we have that
\begin{align*}
A  \ap & \left\| \left\{
\frac{\vp(x_k)^{{1} / {q}}}{U(x_k)^{{1} / {p}}} \left(\int_{x_{k-1}}^{x_k} \left( \int_{x_{k-1}}^t u(s)\,ds\right)^{{r} / {\t}}
u(t) \big\|v^{-{1} / {\t}}\big\|_{\t',S[t,x_k)}^r \,dt
\right)^{{1} / {r}}\right\}\right\|_{\ell^{\rho}(\Z)} \notag\\
&  + \left\|\left\{ \vp(x_k)^{\frac{1}{q}}
\big\|v^{-{1} / {\t}}\big\|_{\t',S[x_k,x_{k+1})}\right\}\right\|_{\ell^{\rho}(\Z)}.
\end{align*}
Since
$$
\left(\int_{x_{k-1}}^{x_k} \bigg(\int_{x_{k-1}}^t u(s) ds\bigg)^{{r} / {\t}} u(t) dt \right)^{{1} / {r}} \approx U(x_k)^{1 / p},
$$
it is easy to see that
\begin{align*}
A \ap & \left\| \left\{
\frac{\vp(x_k)^{{1} / {q}}} {U(x_k)^{{1} / {p}}} \left(\int_{x_{k-1}}^{x_k} \left( \int_{x_{k-1}}^t u(s)\,ds\right)^{{r} / {\t}}
u(t)\big\|v^{-{1} / {\t}}\big\|_{\t',S[t,x_k)}^r \,dt\right)^{{1} / {r}}\right\}
\right\|_{\ell^{\rho}(\Z)}  \\
& + \left\| \left\{ \frac{\vp(x_k)^{{1} / {q}}} {U(x_k)^{{1} / {p}}} \left(\int_{x_{k-1}}^{x_k} \bigg(\int_{x_{k-1}}^t u(s) ds\bigg)^{{r} / {\t}} u(t) dt \right)^{{1} / {r}} \big\|v^{-{1} / {\t}}\big\|_{\t',S[x_k,x_{k+1})} \right\} \right\|_{\ell^{\rho}(\Z)}  \\
\lesssim & \left\| \left\{
\frac{\vp(x_k)^{{1} / {q}}}{U(x_k)^{{1} / {p}}} \left(\int_{x_{k-1}}^{x_k} \left( \int_{x_{k-1}}^t u(s)\,ds\right)^{{r} / {\t}} u(t) \big\|v^{-{1} / {\t}}\big\|_{\t',S[t,\infty)}^r \,dt \right)^{{1} / {r}}\right\} \right\|_{\ell^{\rho}(\Z)} \\
\le & \left\| \left\{ \frac{\vp(x_k)^{{1} / {q}}}{U(x_k)^{{1} / {p}}} \left(\int_{x_{k-1}}^{x_k} U(t)^{{r} / {\t}} u(t) \big\|v^{-{1} / {\t}}\big\|_{\t',S[t,\infty)}^r \,dt \right)^{{1} / {r}}\right\} \right\|_{\ell^{\rho}(\Z)} \\
\ap & \left\| \left\{
\frac{\vp(x_k)^{{1} / {q}}}{U(x_k)^{{1} / {p}}} \left(\int_{x_{k-1}}^{x_k}  \big\|v^{-{1} / {\t}}\big\|_{\t',S[t,\infty)}^r \,d\bigg(U(t)^{{r} / {p}}\bigg) \right)^{{1} / {r}}\right\}
\right\|_{\ell^{\rho}(\Z)}.
\end{align*}
Integrating by parts, we arrive at 
\begin{align}
A \ls & \left\| \left\{
\frac{\vp(x_k)^{{1} / {q}}}{U(x_k)^{{1} / {p}}} \left(\int_{[x_{k-1},x_k)} U(t)^{{r} / {p}} \,d\bigg(-\big\|v^{-{1} / {\t}}\big\|_{\t',S[t-,\infty)}^r\bigg) \right)^{{1} / {r}}\right\}
\right\|_{\ell^{\rho}(\Z)}\notag \\
& + \left\| \left\{
\vp(x_k)^{{1} / {q}} \big\|v^{-{1} / {\t}}\big\|_{\t',S[{x_k}-,\infty)} \right\}
\right\|_{\ell^{\rho}(\Z)} \label{1}
\end{align}
By Lemma~\ref{lem.2.3}, in view of Remark~\ref{remconv}, we have that
\begin{align}
\left\| \left\{
\vp(x_k)^{\frac{1}{q}} \big\|v^{-{1} / {\t}}\big\|_{\t',S[{x_k}-,\infty)} \right\}
\right\|_{\ell^{\rho}(\Z)} = \notag & \\
& \hspace{-3cm} = \left\| \left\{
\vp(x_k)^{{1} / {q}} \left(\big\|v^{-{1} / {\t}}\big\|_{\t',S[{x_k}-,\infty)}^r-\lim_{t\rw \infty}\big\|v^{-{1} / {\t}}\big\|_{\t',S[{x_k}-,\infty)}^r\right)^{1 / {r}}  \right\}
\right\|_{\ell^{\rho}(\Z)} \notag \\
&\hspace{-3cm} \ap \left\| \left\{
\vp(x_k)^{{1} / {q}} \left(\sum_{i=k}^\infty \bigg(\big\|v^{-{1} / {\t}}\big\|_{\t',S[{x_i}-,\infty)}^r - \big\|v^{-{1} / {\t}}\big\|_{\t',S[{x_{i+1}}-,\infty)}^r\bigg)\right)^{1 / {r}}  \right\}\right\|_{\ell^{\rho}(\Z)} \notag\\
&\hspace{-3cm} \ap \left\| \left\{\vp(x_k)^{{1} / {q}} \left( \big\|v^{-{1} / {\t}}\big\|_{\t',S[{x_k}-,\infty)}^r - \big\|v^{-{1} / {\t}}\big\|_{\t',S[{x_{k+1}}-,\infty)}^r\bigg)\right)^{1 / {r}}  \right\}\right\|_{\ell^{\rho}(\Z)} \notag \\
& \hspace{-3cm} \ap \left\| \left\{\vp(x_k)^{{1} / {q}} \left( \int_{[x_k,x_{k+1})} d\bigg(-\big\|v^{-{1} / {\t}}\big\|_{\t',S[t-,\infty)}^r\bigg)\right)^{1 / {r}}  \right\}
\right\|_{\ell^{\rho}(\Z)}. \label{ibparts}
\end{align}
Using \eqref{ibparts} in \eqref{1} and applying Lemma~\ref{lem.2.3}, we arrive at
\begin{align*}
A\ls & \left\| \left\{
\frac{\vp(x_k)^{{1} / {q}}}{U(x_k)^{{1} / {p}}} \left(\int_{[0,x_k)} U(t)^{{r} / {p}} \,d\bigg(-\big\|v^{-{1} / {\t}}\big\|_{\t',S[t-,\infty)}^r\bigg) \right)^{{1} / {r}}\right\}
\right\|_{\ell^{\rho}(\Z)}\notag \\
& \qq + \left\| \left\{
\vp(x_k)^{{1} / {q}}\left( \int_{[x_k,\infty)} d\bigg(-\big\|v^{-{1} / {\t}}\big\|_{\t',S[t-,\infty)}^r\bigg)\right)^{1 / {r}} \right\}
\right\|_{\ell^{\rho}(\Z)}\\
\ap & \left\| \left\{
\vp(x_k)^{{1} / {q}} \left(\int_{[0,\infty)} U(t,x_k)^{{r} / {p}} \,d\bigg(-\big\|v^{-{1} / {\t}}\big\|_{\t',S[t-,\infty)}^r\bigg) \right)^{{1} / {r}}\right\}
\right\|_{\ell^{\rho}(\Z)} = B^*.
\end{align*}
Consequently, $A\ls B^*$.

Conversely, by Lemma~\ref{lem.2.3}, in view of Remark \ref{remconv}, we have that
\begin{align*}
B^*\ap &\left\| \left\{ \frac{\vp(x_k)^{{1} /{q}}}{U(x_k)^{{1} / {p}}} \left(\int_{[x_{k-1},x_k)} U(t)^{{r} / {p}} d\bigg(-\big\|v^{-{1} / {\t}}\big\|_{\t',S[t-,\infty)}^r\bigg)\right)^{{1} / {r}} \right\} \right\|_{\ell^{\rho}(\Z)}\\
& + \left\| \left\{ \vp(x_k)^{{1} / {q}} \big\|v^{-{1} / {\t}}\big\|_{\t',S[x_k-,\infty)}\right\} \right\|_{\ell^{\rho}(\Z)} \\
\ls & \left\| \left\{ \frac{\vp(x_k)^{{1} / {q}}}{U(x_k)^{{1} / {p}}} \left(\int_{[x_{k-1},x_k)}  \bigg(\int_{x_{k-1}}^t u(s)ds\bigg)^{{r} / {p}} d\bigg(-\big\|v^{-{1} / {\t}}\big\|_{\t',S[t-,\infty)}^r\bigg)\right)^{{1} / {r}} \right\} \right\|_{\ell^{\rho}(\Z)}\\
& + \left\| \left\{ \frac{\vp(x_k)^{{1} /{q}}}{U(x_k)^{{1} / {p}}} U(x_{k-1})^{{1} / {p}} \big\|v^{-{1} / {\t}}\big\|_{\t',S[x_{k-1}-,\infty)}\right\} \right\|_{\ell^{\rho}(\Z)} + \left\| \left\{ \vp(x_k)^{{1} / {q}} \big\|v^{-{1} / {\t}}\big\|_{\t',S[x_k-,\infty)}\right\} \right\|_{\ell^{\rho}(\Z)}\\
\ls & \left\| \left\{ \frac{\vp(x_k)^{{1} / {q}}}{U(x_k)^{{1} / {p}}} \left(\int_{[x_{k-1},x_k)}  \bigg(\int_{x_{k-1}}^t u(s)ds\bigg)^{{r} / {p}} d\bigg(-\big\|v^{-{1} / {\t}}\big\|_{\t',S[t-,\infty)}^r\bigg)\right)^{{1} / {r}} \right\} \right\|_{\ell^{\rho}(\Z)}\\
& + \left\| \left\{ \vp(x_{k-1})^{{1} /{q}} \big\|v^{-{1} / {\t}}\big\|_{\t',S[x_{k-1}-,\infty)}\right\} \right\|_{\ell^{\rho}(\Z)} + \left\| \left\{ \vp(x_k)^{{1} / {q}} \big\|v^{-{1} / {\t}}\big\|_{\t',S[x_k-,\infty)}\right\} \right\|_{\ell^{\rho}(\Z)}\\
\approx & \left\| \left\{ \frac{\vp(x_k)^{{1} / {q}}}{U(x_k)^{{1} / {p}}} \left(\int_{[x_{k-1},x_k)}  \bigg(\int_{x_{k-1}}^t u(s)ds\bigg)^{{r} / {p}} d\bigg(-\big\|v^{-{1} / {\t}}\big\|_{\t',S[t-,\infty)}^r\bigg)\right)^{{1} / {r}} \right\} \right\|_{\ell^{\rho}(\Z)}\\
& + \left\| \left\{ \vp(x_k)^{{1} / {q}} \big\|v^{-{1} / {\t}}\big\|_{\t',S[x_k-,\infty)}\right\} \right\|_{\ell^{\rho}(\Z)}.
\end{align*}
Integrating by parts yields that
\begin{align*}
B^* \ls & \left\| \left\{ \frac{\vp(x_k)^{{1} / {q}}}{U(x_k)^{{1} / {p}}} \left(\int_{[x_{k-1},x_k)} \big\|v^{-{1} / {\t}}\big\|_{\t',S[t,\infty)}^r   d\bigg(\bigg(\int_{x_{k-1}}^t u(s)ds\bigg)^{{r} / {p}}\bigg)\right)^{{1} / {r}} \right\} \right\|_{\ell^{\rho}(\Z)}\\
& + \left\| \left\{ \vp(x_k)^{{1} / {q}} \big\|v^{-{1} / {\t}}\big\|_{\t',S[x_k-,\infty)}\right\} \right\|_{\ell^{\rho}(\Z)}\\
\ap & \left\| \left\{ \frac{\vp(x_k)^{{1} / {q}}}{U(x_k)^{{1} / {p}}} \left(\int_{[x_{k-1},x_k)} \bigg(\int_{x_{k-1}}^t u(s)ds\bigg)^{{r} / {\t}} u(t) \big\|v^{-{1} / {\t}}\big\|_{\t',S[t,\infty)}^r   dt\right)^{{1} / {r}} \right\} \right\|_{\ell^{\rho}(\Z)}\\
& + \left\| \left\{ \vp(x_k)^{{1} / {q}} \big\|v^{-{1} / {\t}}\big\|_{\t',S[x_k-,\infty)}\right\} \right\|_{\ell^{\rho}(\Z)}.    
\end{align*}
Since
\begin{align*}
\left\| \left\{ \vp(x_k)^{{1} / {q}} \big\|v^{-{1} / {\t}}\big\|_{\t',S[x_k-,\infty)}\right\}\right\|_{\ell^{\rho}(\Z)} & \\
&\hspace{-3cm} = \left\| \left\{ \vp(x_{k-1})^{{1} / {q}} \big\|v^{-{1} / {\t}}\big\|_{\t',S[x_{k-1}-,\infty)}\right\}\right\|_{\ell^{\rho}(\Z)}\\
&\hspace{-3cm}  \ap \left\| \left\{ \frac{\vp(x_{k-1})^{{1} / {q}}}{U(x_{k-1})^{{1} / {p}}} \big\|v^{-{1} / {\t}}\big\|_{\t',S[x_{k-1}-,\infty)} \left(\int_{x_{k-2}}^{x_{k-1}} \bigg(\int_{x_{k-2}}^t u(s)ds\bigg)^{{r} / {\t}} u(t) dt\right)^{{1} / {r}} \right\} \right\|_{\ell^{\rho}(\Z)}\\
&\hspace{-3cm}  \le \left\| \left\{ \frac{\vp(x_{k-1})^{{1} / {q}}}{U(x_{k-1})^{{1} / {p}}}  \left(\int_{x_{k-2}}^{x_{k-1}} \bigg(\int_{x_{k-2}}^t u(s)ds\bigg)^{{r} / {\t}} u(t) \big\|v^{-{1} / {\t}}\big\|_{\t',S[t-,\infty)}^r dt\right)^{{1} / {r}} \right\} \right\|_{\ell^{\rho}(\Z)}\\
&\hspace{-3cm}  = \left\| \left\{ \frac{\vp(x_{k})^{{1} / {q}}}{U(x_{k})^{{1} / {p}}}  \left(\int_{x_{k-1}}^{x_{k}} \bigg(\int_{x_{k-1}}^t u(s)ds\bigg)^{{r} / {\t}} u(t) \big\|v^{-{1} / {\t}}\big\|_{\t',S[t-,\infty)}^r    dt\right)^{{1}/{r}} \right\} \right\|_{\ell^{\rho}(\Z)},
\end{align*}
we arrive at
\begin{align*}
B^* \ls & \left\| \left\{ \frac{\vp(x_{k})^{{1} / {q}}}{U(x_{k})^{{1} / {p}}}  \left(\int_{x_{k-1}}^{x_{k}} \bigg(\int_{x_{k-1}}^t u(s)ds\bigg)^{{r} / {\t}} u(t) \big\|v^{-{1} / {\t}}\big\|_{\t',S[t,\infty)}^r dt\right)^{{1} / {r}} \right\} \right\|_{\ell^{\rho}(\Z)}\\
\ap & \left\| \left\{ \frac{\vp(x_{k})^{{1} / {q}}}{U(x_{k})^{{1} / {p}}}  \left(\int_{x_{k-1}}^{x_{k}} \bigg(\int_{x_{k-1}}^t u(s)ds\bigg)^{{r} / {\t}} u(t) \big\|v^{-{1} / {\t}}\big\|_{\t',S[t,x_k)}^r  dt\right)^{{1} / {r}} \right\} \right\|_{\ell^{\rho}(\Z)}\\
& +\left\| \left\{ \vp(x_{k})^{{1} / {q}} \big\|v^{-{1} / {\t}}\big\|_{\t',S[x_k,\infty)}  \right\} \right\|_{\ell^{\rho}(\Z)} \ap A.    
\end{align*}	

Now assume that $\theta = \infty$. In this case the proof can be done in the same line and we leave it to the reader. The only difference is that one should apply \cite[Theorem 2.2, (e)]{mu.emb} and take into account that $C(x_k,x_{k+1}) = \|v^{-1}\|_{1,S[x_k,x_{k+1})}$, $k \in \Z$.
\end{proof}

We now in a position to characterize inequality \eqref{eq.4.1}.
\begin{thm}\label{Thm.4.1}
Let $0 < p,\, q < \infty$, $1\le \t \le \infty$,  $1 / \rho =\left(1/q-1/\t\right)_+$  and let $u,\,w\in {\mathcal W} \I$ and $\in {\mathcal W}(\rn)$ be such that $U$ is admissible and $V_{\theta}(t) < \infty$, $t \in \I$ with $\lim_{t\rightarrow \infty} V_{\theta} (t) = 0$. Suppose that non-negative Borel measure $\mu$ on $[0,\infty)$ is non-degenerate with respect to $U^{{q} / {p}}$. Then the inequality \eqref{eq.4.1} holds for every measurable function on $\rn$ if and only if
\begin{itemize}
\item[(i)] $\t \le \min\{p,q\}$ and
$$
I_1:= \sup_{x\in \I} \bigg(\int_0^\infty{\mathcal U}(x,t)^{{q} / {p}} \,d\mu(t) \bigg)^{{1} / {q}} \sup_{t\in(0,\infty)} {\mathcal U}(t,x)^{{1} / {p}} V_{\theta}(t)<\infty.
$$
Moreover, the best constant in \eqref{eq.4.1} satisfies $c\ap I_1$.

\item[(ii)] $q < \t < p$ and
$$
I_2:=\left(\int_0^\infty \bigg(\int_0^\infty {\mathcal U}(x,t)^{{q} / {p}} \,d\mu (t)\bigg)^{{\rho} / {\t}} \, \bigg( \sup_{t\in(0,\infty)}{\mathcal U}(t,x)^{1 / {p}}  V_{\theta}(t) \bigg)^{\rho} \,d\mu (x) \right)^{{1} / {\rho}}<\infty.
$$
Moreover, the best constant in \eqref{eq.4.1} satisfies $c\ap I_2$.

\item[(iii)] $p < \t \le q$, $r=\t p/ (\t-p)$ and
$$
I_3:=\sup_{x\in \I} \bigg(\int_0^\infty {\mathcal U}(x,t)^{{q} / {p}} \,d\mu(t) \bigg)^{{1} / {q}} \bigg(\int_0^\infty {\mathcal U}(t,x)^{{r} / {p}} \, d \, \big(-V_{\theta}(t-)^r\big)\bigg)^{{1} / {r}}<\infty.
$$
Moreover, the best constant in \eqref{eq.4.1} satisfies $c\ap I_3$.

\item[(iv)] $\max\{p,q\} < \t$, $r=\t p/ (\t-p)$ and
$$
I_4:=\left(\int_0^\infty \bigg(\int_0^\infty {\mathcal U}(x,t)^{{q} / {p}} \,d\mu (t)\bigg)^{{\rho} / {\t}} \bigg(\int_0^\infty {\mathcal U}(t,x)^{{r} / {p}} \, d \, \big(-V_{\theta}(t-)^r\big) \bigg)^{{\rho} / {r}} \,d\mu(x)\right)^{{1} / {\rho}} < \infty.
$$
Moreover, the best constant in \eqref{eq.4.1} satisfies $c\ap I_4$.

\item[(v)] $\t = \infty$ and
$$
I_5:=\left(\int_0^\infty\left( \int_0^\infty {\mathcal U(t,x)}\, d\big(-V_{\infty}(t)^p\big)\right)^{{q} / {p}} \, d\mu (x)\right)^{{1} / {q}} <\infty
$$
Moreover, the best constant in \eqref{eq.4.1} satisfies $c\ap I_5$.

\end{itemize}
\end{thm}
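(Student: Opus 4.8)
The plan is to deduce Theorem~\ref{Thm.4.1} from the anti-discretized conditions of Lemma~\ref{Lem4.2} by a standard anti-discretization argument, exactly as was done for the supremum operator in \cite{gp2}. Recall that by Lemma~\ref{Lem3.1} the validity of \eqref{eq.4.1} is equivalent to $A < \infty$ with $c \ap A$, and Lemma~\ref{Lem4.2} rewrites $A$ (up to equivalence) as the discrete quantity $A^*$ when $\t \le p$, and as $B^*$ when $p < \t$. So it suffices to show that $A^* \ap I_1$ or $A^* \ap I_2$ (according as $q \le \t$ or $q > \t$, i.e.\ $\rho = \infty$ or $\rho < \infty$) in case $\t \le p$, and $B^* \ap I_3$ or $B^* \ap I_4$ in case $p < \t < \infty$, and finally $A \ap I_5$ when $\t = \infty$. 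The passage from the $\ell^{\rho}(\Z)$-norm over the discretizing sequence $\{x_k\}$ to the continuous integral/supremum in $I_1,\dots,I_5$ is the content of the anti-discretization lemmas of \cite{gp1,gp2}; concretely, one uses that $\{\vp(x_k)^{1/q}\}$ is geometrically increasing while $\{\vp(x_k)^{1/q} U(x_k)^{-1/p}\}$ is geometrically decreasing, together with the fact that $\vp$ is the fundamental function of $\mu$ with respect to $U^{q/p}$ so that $\vp(x_k) \ap \int_0^\infty \mathcal{U}(x_k,t)^{q/p}\,d\mu(t)$ on the discretizing sequence.

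First I would treat the case $\rho = \infty$, i.e.\ $\t \le q$ (cases (i), (iii) and (v) when $\t = \infty$). Here the $\ell^\infty(\Z)$-norm of a product is a supremum over $k$, and one must show that
\[
\sup_{k\in\Z} \vp(x_k)^{1/q}\,\Phi(x_k) \ap \sup_{x \in \I} \Big(\int_0^\infty \mathcal{U}(x,t)^{q/p}\,d\mu(t)\Big)^{1/q}\,\Phi(x),
\]
where $\Phi(x) = \sup_{t} \mathcal{U}(t,x)^{1/p} V_\theta(t)$ in case (i), and $\Phi(x) = \big(\int_0^\infty \mathcal{U}(t,x)^{r/p}\,d(-V_\theta(t-)^r)\big)^{1/r}$ in case (iii). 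The direction ``$\le$'' is immediate by plugging $x = x_k$. For ``$\gtrsim$'', fix an arbitrary $x > 0$, choose $k$ with $x \in [x_{k},x_{k+1})$, and use that $\vp$ and $\mathcal{U}(\cdot,t)^{q/p}$ and $\Phi$ are, up to the dichotomy $\Z = \Z_1 \cup \Z_2$, comparable on consecutive points of the discretizing sequence — this is precisely what Definition~\ref{def.2.3}(iii) and the remark following \eqref{eq.nondeg} give us, using the $U^{q/p}$-quasiconcavity of $\vp$ and the monotonicity/quasiconcavity properties of $\mathcal{U}(x,t)$ in $x$. The argument is entirely parallel to the proof of \cite[Theorem~3.5 (or the relevant statement)]{gp2}.

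Next I would handle $\rho < \infty$, i.e.\ $q < \t$ (cases (ii) and (iv)). Here one needs the ``integral'' anti-discretization: an $\ell^\rho(\Z)$-sum $\sum_k [\vp(x_k)^{1/q}\Phi(x_k)]^\rho$ must be shown equivalent to $\int_0^\infty \big(\int_0^\infty \mathcal{U}(x,t)^{q/p}\,d\mu(t)\big)^{\rho/\t}\Phi(x)^\rho\,d\mu(x)$. The exponent bookkeeping is the delicate point: one writes $\vp(x_k)^{\rho/q} = \vp(x_k)^{\rho/q - \rho/\t}\vp(x_k)^{\rho/\t}$, notes $\rho/q - \rho/\t = 1$, and uses that $\vp(x_k)\ap \vp(x_k) - \vp(x_{k-1})$ on one part of the decomposition (by geometric growth) together with $\vp(x_k)^{\rho/\t}\Phi(x_k)^\rho \ap \int_{[x_{k-1},x_k)}\Phi(x)^\rho\,d\big(\vp(x)^{\rho/\t}\text{-like quantity}\big)$; summing over $k$ telescopes/reassembles the integral $\int_0^\infty(\dots)^{\rho/\t}\Phi^\rho\,d\mu$. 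This is the standard anti-discretization of sums against a quasiconcave fundamental function, carried out in \cite{gmp,gmp2013} and \cite[§3]{gp1}; the role of the non-degeneracy hypothesis \eqref{eq.nondeg} is exactly to guarantee $\vp \in \O_{U^{q/p}}$ so that a discretizing sequence exists and the reassembly is valid at both endpoints $0$ and $\infty$.

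The main obstacle — or rather the main bookkeeping burden — is the $\rho < \infty$ case: keeping track of which terms are absorbed on $\Z_1$ versus $\Z_2$, verifying that the ``boundary'' terms $\vp(x_k)^{1/q}V_\theta(x_k-)$ in $B^*$ (cf.\ \eqref{1}, \eqref{ibparts}) can be folded into the integral $\int_0^\infty \mathcal{U}(t,x)^{r/p}\,d(-V_\theta(t-)^r)$ after passing to the continuous parameter, and confirming that $\vp$ evaluated on the discretizing sequence is genuinely equivalent to the fundamental function \eqref{eq.4.3}. Once the anti-discretization lemmas from \cite{gp1,gp2} are invoked in the appropriate form, each of (i)–(v) follows by specializing $\Phi$ and combining with Lemma~\ref{Lem3.1} and Lemma~\ref{Lem4.2}; the equivalence $c \ap I_j$ of the best constants is then automatic from $c \ap A \ap A^* \ap I_j$ (resp.\ $c \ap A \ap B^* \ap I_j$). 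I would present the argument for case (i) in full detail and indicate that (ii)–(v) follow by the same scheme with the obvious changes, as is customary in this literature.
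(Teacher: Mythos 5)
Your proposal follows essentially the same route as the paper: the paper's proof of Theorem \ref{Thm.4.1} is precisely the composition of Lemma \ref{Lem3.1}, Lemma \ref{Lem4.2}, and the anti-discretization results of \cite{gp1} (Lemma 3.5 there for the supremum case $\rho=\infty$, Theorem 2.11 there for $\rho<\infty$), which is exactly what you invoke and sketch, including the identification $\vp(x)\approx\int_0^\infty\mathcal U(x,t)^{q/p}\,d\mu(t)$ and the chain $c\approx A\approx A^*$ (or $B^*$) $\approx I_j$. One concrete slip: case (v) does not belong to the $\rho=\infty$ group. When $\t=\infty$ one has $1/\rho=(1/q-1/\t)_+=1/q$, so $\rho=q<\infty$, and accordingly $I_5$ is an integral in $d\mu$, not a supremum; it is obtained from Lemma \ref{Lem4.2}(b) in its $\t=\infty$ form (where $r=p$ and $V_\infty(t)=\|v^{-1}\|_{1,\dual B(0,t)}$) followed by the $\rho<\infty$ anti-discretization (\cite[Theorem 2.11]{gp1}), exactly as in cases (ii) and (iv); trying to prove a sup-type equivalence there would not produce $I_5$. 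Apart from this bookkeeping error, the classification of (i), (iii) as sup-type and (ii), (iv) as integral-type and the overall scheme match the paper.
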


\begin{proof}
\noindent \begin{itemize}
\item[(i)] The proof of the statement follows by Lemma~\ref{Lem3.1}, Lemma~\ref{Lem4.2}, (a), and \cite[Lemma 3.5]{gp1}.

\item[(ii)] The proof of the statement follows by Lemma~\ref{Lem3.1}, Lemma~ \ref{Lem4.2}, (a) and \cite[Theorem 2.11]{gp1}.

\item[(iii)] The proof of the statement follows by Lemma~\ref{Lem3.1}, Lemma~ \ref{Lem4.2}, (b), and \cite[Lemma 3.5]{gp1}.

\item[(iv)] The proof of the statement follows by Lemma~\ref{Lem3.1}, Lemma~\ref{Lem4.2}, (b), and \cite[Theorem 2.11]{gp1}.

\item[(v)] The proof of the statement follows by Lemma~\ref{Lem3.1}, Lemma~\ref{Lem4.2}, (b), and \cite[Theorem 2.11]{gp1}.
\end{itemize}
\end{proof}



\section{Characterization of $n$-dimensional bilinear Hardy inequalities}\label{main}

In this section we give characterization of $n$-dimensional bilinear Hardy inequalities \eqref{eq.main0} and \eqref{eq.main}.
 
The following  note allows us to concentrate our attention only on characterization of \eqref{eq.main}.

	\begin{rem}\label{equiv.rem}
		Note that the inequality
		\begin{equation}\label{11}
		\bigg( \int_0^{\infty} \bigg(\int_{B(0,t)} f \cdot \int_{B(0,t)} g \bigg)^q u(t) dt \bigg)^{{1} / {q}} \leq C \, \bigg( \int_{\rn} f^{p_1} v_1 \bigg)^{{1} / {p_1}} \bigg(\int_{\rn} g^{p_2} v_2 \bigg)^{{1} / {p_2}}
		\end{equation}	
		is equivalent to the inequality
		\begin{equation}\label{2}
		\bigg( \int_0^{\infty} \bigg(\int_{\dual B(0,t)} f \cdot \int_{\Btd} g \bigg)^q \tilde u(t) dt\bigg)^{{1} / {q}} \leq C \bigg(\int_{\rn} f^{p_1} \tilde v_1 \bigg)^{{1} / {p_1}} \bigg(\int_{\rn} g^{p_2} \tilde v_2 \bigg)^{{1} / {p_2}},
		\end{equation}	
		where $\tilde u(t) = u\big(t^{-1}\big) t^{-2}$, $\tilde v_1(x) = v_1\big(|x|^{-2} x\big) |x|^{-2n(1-p_1)}$, $\tilde v_2(x) = v_2\big(|x|^{-2} x\big) |x|^{-2n(1-p_2)}$.
		
		Indeed: Since any $f \in \mp^+ (\rn)$ can be uniquely represented as $f(x) = g \big(|x|^{-2}x \big)|x|^{-2n}$, $g \in \mp^+(\rn)$, then inequality \eqref{11} is equivalent to the following inequality
		\begin{align}
		\bigg( \int_0^{\infty} \bigg(\int_{B(0,t)} f\big(|y|^{-2} y\big) |y|^{-2n} \,dy \cdot \int_{B(0,t)} g\big(|y|^{-2} y\big) |y|^{-2n} \,dy \bigg)^q u(t) dt\bigg)^{{1} / {q}} & \notag \\
		&\hspace{-8cm} \leq C \, \bigg(\int_{\rn} \big( f \big(|y|^{-2} y\big)\big)^{p_1} v_1(y) |y|^{-2np_1} dy \bigg)^{{1} / {p_1}} \bigg(\int_{\rn} \big( g\big(|y|^{-2} y\big) \big)^{p_2} v_2(y) |y|^{-2np_2}\,dy \bigg)^{{1} / {p_2}}. \label{3}
		\end{align}
		Using the substitution $x=|y|^{-2} y$ in multidimensional integrals, we get that \eqref{3} is equivalent to the inequality
		\begin{align*}
		\bigg(\int_0^{\infty}\bigg(\int_{\dual B(0,{1} / {t})} f(x) dx \cdot \int_{\dual B(0,{1} / {t})} g(x) dx \bigg)^q u(t) dt\bigg)^{{1} / {q}} & \\
		&\hspace{-7cm}\leq C \, \bigg(\int_{\rn} f(x)^{p_1} v_1\big(|x|^{-2} x\big) |x|^{-2n(1-p_1)} dx \bigg)^{{1} / {p_1}} \bigg(\int_{\rn} g(x)^{p_2} v_2\big(|x|^{-2} x\big) |x|^{-2n(1-p_2)} \,dx \bigg)^{{1} / {p_2}},
		\end{align*}
		and finally applying $\tau = {1} / {t}$, we see that the latter is equivalent to
		\begin{align*}
		\bigg( \int_0^{\infty}\bigg(\int_{\dual B(0,\tau)} f(x) dx \cdot \int_{\dual B(0,\tau)} g(x) dx \bigg)^q u\big(\tau^{-1}\big)  \tau^{-2}\, d\tau \bigg)^{{1} / {q}} & \\
		&\hspace{-7cm} \leq C \, \bigg(\int_{\rn} f(x)^{p_1} v_1\big(|x|^{-2} x\big) |x|^{-2n(1-p_1)} dx \bigg)^{{1} / {p_1}} \bigg(\int_{\rn} g(x)^{p_2} v_2\big(|x|^{-2} x\big) |x|^{-2n(1-p_2)} \,dx\bigg)^{{1} / {p_2}}.
		\end{align*}
	\end{rem}
	
	Now we present and prove our main results.
	\begin{thm}\label{thm.main.0}
	Let $0 < q < \infty$, $1 \le p_1,\,p_2 \le \infty$, $p_1 \le q$, and let $u\in {\mathcal W}(0,\infty)$, $v_1,\,v_2 \in {\mathcal W}(\rn)$. Then inequality \eqref{eq.main}
	holds for all $f,\,g \in \mp^+ (\rn)$ if and only if:
	
	{\rm (a)} $1 \le p_2 \le q < \infty$, and
	\begin{align*}
	B_1 : = & \sup_{t \in \I} U(t)^{1 / q} \big\| v_1^{- 1 / p_1} \big\|_{p_1^{\prime},\Btd}  \big\| v_2^{- 1 / p_2} \big\|_{p_2^{\prime},\Btd} < \infty.
	\end{align*}
	Moreover, the best constant $C$ in \eqref{eq.main} satisfies $C\approx B_1$.
	
    {\rm (b)} $1 \le p_2  < \infty$, $0 < q < p_2$, $1 / r_2 = 1 / q -  1 / p_2$, and
    \begin{align*}
    B_2 : = &  \sup_{t \in \I} \big\| v_1^{- 1 / p_1} \big\|_{p_1^{\prime},\Btd} \bigg( \int_0^t U(y)^{r_2 / p_2} u(y) \big\| v_2^{- 1 / p_2} \big\|_{p_2^{\prime},\Byd}^{r_2} \,dy \bigg)^{1 / r_2} < \infty.
    \end{align*}
    Moreover, the best constant $C$ in \eqref{eq.main} satisfies $C\approx B_2$.
    
    {\rm (c)} $p_2 = \infty$, and
    \begin{align*}
    B_3 : = &  \sup_{t \in \I} \big\| v_1^{- 1 / p_1} \big\|_{p_1^{\prime},\Btd} \bigg( \int_0^t u(y) \big\| v_2^{- 1} \big\|_{1,\Byd}^q \,dy \bigg)^{1 / q} < \infty.
    \end{align*}
    Moreover, the best constant $C$ in \eqref{eq.main} satisfies $C\approx B_3$.
	\end{thm}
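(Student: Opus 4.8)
The plan is to exploit the structure of the bilinear operator $f \mapsto \int_{\dual B(0,\cdot)} f$ by fixing one of the two functions and treating the resulting inequality as a weighted Hardy-type inequality for the other. Concretely, for fixed $g$, inequality \eqref{eq.main} reads
\begin{equation*}
\bigg\| \Big(\int_{\dual B(0,\cdot)} g\Big) \int_{\dual B(0,\cdot)} f \bigg\|_{q,u,\I} \le C \,\|f\|_{p_1,v_1,\rn}\, \|g\|_{p_2,v_2,\rn},
\end{equation*}
which is an $n$-dimensional weighted Hardy inequality for the map $f \mapsto \int_{\dual B(0,\cdot)} f$ with the $\I$-side weight $u$ replaced by the weight $u(t) \big(\int_{\dual B(0,t)} g\big)^q$. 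Since we are in the range $p_1 \le q$, the multidimensional Hardy inequality from \cite{ChristGraf,DrabHeinKuf,mu.emb} applies and gives a known necessary and sufficient condition on that effective weight. Explicitly, since $f \ge 0$, the operator $\int_{\dual B(0,t)} f$ only depends on the nonincreasing rearrangement-type radial data; the standard characterization of $\|\int_{\dual B(0,\cdot)} f\|_{q,\omega,\I} \le C \|f\|_{p_1,v_1,\rn}$ for $p_1 \le q$ is
\begin{equation*}
\sup_{t\in\I}\Big(\int_0^t \omega\Big)^{1/q}\big\|v_1^{-1/p_1}\big\|_{p_1',\Btd} < \infty
\end{equation*}
(with the usual modification when $p_1=\infty$; note $\big\|v_1^{-1/p_1}\big\|_{p_1',\Btd}$ is exactly $V_{p_1}(t)$ in the notation of the paper). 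Substituting $\omega(t) = u(t)\big(\int_{\dual B(0,t)} g\big)^q$ turns the problem into finding the best constant in
\begin{equation*}
\sup_{t\in\I} \bigg(\int_0^t u(y)\Big(\int_{\dual B(0,y)} g\Big)^q dy\bigg)^{1/q}\big\|v_1^{-1/p_1}\big\|_{p_1',\Btd} \le C\,\|g\|_{p_2,v_2,\rn},
\end{equation*}
and this is again a (this time more subtle) weighted inequality for the map $g \mapsto \int_{\dual B(0,\cdot)} g$, but now with a \emph{supremum} outside.

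The next step is to resolve this remaining sup-type inequality by cases according to the relative size of $p_2$ and $q$. When $p_2 \le q$ (case (a)), one can pull the factor $\big\|v_1^{-1/p_1}\big\|_{p_1',\Btd}$ inside using its monotonicity (it is nonincreasing in $t$) together with the fact that $\int_0^t u(y)(\cdots)^q dy$ is nondecreasing, and a Fubini/supremum interchange of the type recalled in Remark~\ref{cor.2.0} reduces the condition to the single supremum defining $B_1$; the forward inequality follows from the same Hardy characterization applied to $g$, and necessity from testing on suitable functions supported on annuli $S[a,b)$. When $q < p_2$ (case (b)), the outer supremum over $t$ combined with the inner $L^q$-integral is handled by the standard $q < p_2$ weighted Hardy inequality (the Sawyer/Maz'ya–Rozin–type condition), which after integration by parts produces the weight $U(y)^{r_2/p_2} u(y)$ against $\big\|v_2^{-1/p_2}\big\|_{p_2',\Byd}^{r_2}$ and hence $B_2$. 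Case (c), $p_2 = \infty$, is the limiting version where $\big\|v_2^{-1/p_2}\big\|_{p_2',\Byd}$ is replaced by $\big\|v_2^{-1}\big\|_{1,\Byd}$ and $r_2 = q$, giving $B_3$; this follows from the $p_2=\infty$ endpoint of the same multidimensional Hardy inequality.

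For each case, the argument has two halves. For sufficiency, I would chain the two applications of the multidimensional Hardy inequality (first in $f$, then in $g$), being careful that the constant in the first application is exactly $\sup_t (\int_0^t \omega)^{1/q} V_{p_1}(t)$ and that substituting $\omega$ and then applying the second Hardy inequality produces precisely $B_i$ up to equivalence constants. For necessity, I would test \eqref{eq.main} on products $f = f_0$, $g = g_0$ where $f_0, g_0$ are chosen (essentially radial, supported on balls or annuli) to saturate the corresponding extremal in the one-variable Hardy inequalities; plugging these in and optimizing over the free scale gives $B_i \lesssim C$. The main obstacle, I expect, is the \emph{interchange of the outer supremum with the inner integral} in the reduction of case (a) — making sure that replacing $\sup_t (\int_0^t u (\int_{\dual B(0,\cdot)} g)^q)^{1/q} V_{p_1}(t)$ by the clean two-weight condition $\sup_t U(t)^{1/q} V_{p_1}(t) V_{p_2}(t)$ is both valid and sharp requires the $b$-quasiconcavity/Fubini machinery of Remark~\ref{cor.2.0} applied to the monotone function $V_{p_1}$, and one must check the endpoint behavior $p_1 = \infty$ and $p_2 = \infty$ separately. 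The remaining bookkeeping (integration by parts to pass from $\int_0^t U(y)^{r_2/p_2} u(y)(\cdots)\,dy$ forms to the stated ones, and the $0\cdot\infty$, $\infty/\infty$ conventions) is routine given Convention~\ref{Notat.and.prelim.conv.1.1}.
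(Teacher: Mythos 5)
Your proposal is correct and follows essentially the same route as the paper: iterate the multidimensional weighted Hardy characterization from \cite{mu.emb}, first in $f$ (producing the effective weight $u(t)\big(\int_{\dual B(0,t)}g\big)^q$ and the supremum condition with $\big\|v_1^{-1/p_1}\big\|_{p_1',\Btd}$), then in $g$, splitting into the cases $p_2\le q$, $q<p_2<\infty$, $p_2=\infty$, and using in case (a) the supremum-interchange identity of Remark~\ref{cor.2.0} with the nonincreasing function $\big\|v_1^{-1/p_1}\big\|_{p_1',\Btd}$. The only minor difference is that the paper's chain is written as an exact chain of identities for $\sup_{f,g}\LHS/(\|f\|\|g\|)$ so that necessity is automatic, whereas you propose a separate test-function argument for necessity; that is harmless but redundant once the two applications of the Hardy characterization are noted to be sharp.
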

	
	\begin{proof}
	Interchanging the suprema, we obtain that
	\begin{align}
	\sup_{f,g \in \mp^+(\rn)} \frac{\bigg\| \,\int_{\dual B(0,\cdot)} f \cdot \int_{\dual B(0,\cdot)} g \,\bigg\|_{q,u,\I}}{\|f\|_{p_1,v_1,\rn} \, \|g\|_{p_2,v_2,\rn}} & \notag \\
	& \hspace{-5cm} = \sup_{g \in \mp^+ (\rn)} \frac{1}{\|g\|_{p_2,v_2,\rn}} \sup_{f \in \mp^+(\rn)} \frac{\bigg\| \,\int_{\dual B(0,\cdot)} f \cdot \int_{\dual B(0,\cdot)} g \,\bigg\|_{q,u,\I}} {\|f\|_{p_1,v_1,\rn}}. \label{eq.iteration}
	\end{align}
	By \cite[Theorem 2.2, (a) and (f)]{mu.emb},  we get that
	\begin{align*}
	\sup_{f,g \in \mp^+(\rn)} \frac{\bigg\| \,\int_{\dual B(0,\cdot)} f \cdot \int_{\dual B(0,\cdot)} g \,\bigg\|_{q,u,\I}}{\|f\|_{p_1,v_1,\rn} \, \|g\|_{p_2,v_2,\rn}} & \\
	& \hspace{-5cm} = \sup_{g \in \mp^+ (\rn)} \frac{1}{\|g\|_{p_2,v_2,\rn}} \sup_{t \in \I} \bigg( \int_0^t \bigg( \int_{\dual B(0,\tau)} g \bigg)^q u(\tau)\,d\tau \bigg)^{1 / q} \, \big\| v_1^{- 1 / p_1} \big\|_{p_1^{\prime},\Btd}.
	\end{align*}
	
	(a) Let $1 \le p_2 \le q < \infty$. Again, interchanging the suprema, by \cite[Theorem 2.2, (a) and (f)]{mu.emb}, on using \eqref{Fubini.1}, we get that
	\begin{align*}
	\sup_{f,g \in \mp^+(\rn)} \frac{\bigg\| \,\int_{\dual B(0,\cdot)} f \cdot \int_{\dual B(0,\cdot)} g \,\bigg\|_{q,u,\I}}{\|f\|_{p_1,v_1,\rn} \, \|g\|_{p_2,v_2,\rn}} & \\
	& \hspace{-5cm} = \sup_{t \in \I}{\big\| v_1^{- 1 / p_1} \big\|_{p_1^{\prime},\Btd}} \sup_{g \in \mp^+ (\rn)} \frac{\bigg( \int_0^{\infty} \bigg( \int_{\dual B(0,\tau)} g \bigg)^q \chi_{(0,t)}(\tau) u(\tau)\,d\tau \bigg)^{1 / q} }{\|g\|_{p_2,v_2,\rn}} \\
	& \hspace{-5cm} = \sup_{t \in \I} \big\| v_1^{- 1 / p_1} \big\|_{p_1^{\prime},\Btd} \sup_{y \in \I} \bigg( \int_0^y 
	\chi_{(0,t)}(\tau) u(\tau)\,d\tau \bigg)^{1 / q} \big\| v_2^{- 1 / p_2} \big\|_{p_2^{\prime},\Byd} \\
	& \hspace{-5cm} = \sup_{t \in \I} \big\| v_1^{- 1 / p_1} \big\|_{p_1^{\prime},\Btd} \sup_{y \in (0,t)} \bigg( \int_0^y 
	u \bigg)^{1 / q} \big\| v_2^{- 1 / p_2} \big\|_{p_2^{\prime},\Byd} \\
	& \hspace{-5cm} = \sup_{t \in \I} U(t)^{1 / q} \big\| v_1^{- 1 / p_1} \big\|_{p_1^{\prime},\Btd}  \big\| v_2^{- 1 / p_2} \big\|_{p_2^{\prime},\Btd}.
	\end{align*}
	
	(b) Let $1 \le p_2  < \infty$, $0 < q < p_2$  and $1 / r_2 = 1 / q -  1 / p_2$. Interchanging the suprema, by \cite[Theorem 2.2, (b) and (g)]{mu.emb},  we obtain that
	\begin{align*}
	\sup_{f,g \in \mp^+(\rn)} \frac{\bigg\| \,\int_{\dual B(0,\cdot)} f \cdot \int_{\dual B(0,\cdot)} g \,\bigg\|_{q,u,\I}}{\|f\|_{p_1,v_1,\rn} \, \|g\|_{p_2,v_2,\rn}} & \\
    & \hspace{-5cm} = \sup_{t \in \I}{\big\| v_1^{- 1 / p_1} \big\|_{p_1^{\prime},\Btd}} \sup_{g \in \mp^+ (\rn)} \frac{\bigg( \int_0^{\infty} \bigg( \int_{\dual B(0,\tau)} g \bigg)^q \chi_{(0,t)}(\tau) u(\tau)\,d\tau \bigg)^{1 / q} }{\|g\|_{p_2,v_2,\rn}} \\
	& \hspace{-5cm} = \sup_{t \in \I} \big\| v_1^{- 1 / p_1} \big\|_{p_1^{\prime},\Btd} \bigg( \int_0^{\infty} \bigg( \int_0^y \chi_{(0,t)}(\tau) u(\tau)\,d\tau \bigg)^{r_2 / p_2} \chi_{(0,t)}(y) u(y) \big\| v_2^{- 1 / p_2} \big\|_{p_2^{\prime},\Byd}^{r_2} \,dy \bigg)^{1 / r_2} \\
	& \hspace{-5cm} = \sup_{t \in \I} \big\| v_1^{- 1 / p_1} \big\|_{p_1^{\prime},\Btd} \bigg( \int_0^t U(y)^{r_2 / p_2} u(y) \big\| v_2^{- 1 / p_2} \big\|_{p_2^{\prime},\Byd}^{r_2} \,dy \bigg)^{1 / r_2}.
	\end{align*}
	
    {\rm (c)} Let $p_2 = \infty$. Interchanging the suprema, by \cite[Theorem 2.2, (e)]{mu.emb},  we obtain that
    \begin{align*}
    \sup_{f,g \in \mp^+(\rn)} \frac{\bigg( \int_0^{\infty} \bigg( \int_{\Btd} f \cdot \int_{\Btd} g \bigg)^q u(t)\,dt \bigg)^{1 / q}}{\|f\|_{p_1,v_1,\rn} \, \|g\|_{p_2,v_2,\rn}} & \\
    & \hspace{-5cm} = \sup_{t \in \I}{\big\| v_1^{- 1 / p_1} \big\|_{p_1^{\prime},\Btd}} \sup_{g \in \mp^+ (\rn)} \frac{\bigg( \int_0^{\infty} \bigg( \int_{\dual B(0,\tau)} g \bigg)^q \chi_{(0,t)}(\tau) u(\tau)\,d\tau \bigg)^{1 / q} }{\|g\|_{p_2,v_2,\rn}} \\
    & \hspace{-5cm} = \sup_{t \in \I} \big\| v_1^{- 1 / p_1} \big\|_{p_1^{\prime},\Btd} \bigg( \int_0^t u(y) \big\| v_2^{- 1} \big\|_{1,\Byd}^q \,dy \bigg)^{1 / q}.
    \end{align*}
	\end{proof}

\begin{thm}\label{main.thm}
	Let $1 \le p_1,\,p_2 < \infty$, $0 < q  < p_1$, $1/r_1 = 1/q - 1/p_1$. Suppose that $v_1,\,v_2 \in {\mathcal W}(\rn)$ are such that $\|v_i^{-{1} / {p_i}} \|_{p_i',\Btd} < \infty$, $t \in \I$ with $\lim_{t\rightarrow \infty} \|v_i^{-{1} / {p_i}} \|_{p_i',\Btd} = 0$, $i = 1,2$. Assume that $u\in {\mathcal W}(0,\infty)$ is such that $U^{{r_1} / {q}}$ is admissible and the fundamental function of the measure 
	$$
	d\nu (t) = U(t)^{r_1 / q}d \, \bigg[ - \big\| v_1^{- 1 / p_1} \big\|_{p_1^{\prime},\Btd}^{r_1 / p_1^{\prime}} \bigg]
	$$
	is non-degenerate with respect to $U^{{r_1} / {q}}$, that is, $\vp \in \Omega_{U^{r_1 / q}}$, where
	$$
	\vp (x) = \int_{[0,\infty)} \bigg( \frac{U(x)U(t)}{U(x) + U(t)}  \bigg)^{r_1 / q} d \, \bigg[ - \big\| v_1^{- 1 / p_1} \big\|_{p_1^{\prime},\Btd}^{r_1} \bigg], \quad x \in (0,\infty).
	$$
    Then inequality \eqref{eq.main} holds for all $f,\,g \in \mp^+ (\rn)$ if and only if:
	
	{\rm (i)} $p_2 \le q$, and
	\begin{align*}
	A_1 : = & \sup_{x \in (0,\infty)} \bigg(\int_0^{\infty}{\mathcal U}(x,t)^{{r_1}/{q}}\,  U(t)^{r_1 / q} \, d \, \bigg[ - \big\| v_1^{- 1 / p_1} \big\|_{p_1^{\prime},\Bxd}^{r_1} \bigg]\bigg)^{{1}/{r_1}} \\
	& \times \sup_{t\in (0,\infty)}{\mathcal U}(t,x)^{{1}/{q}} \big\| v_2^{-1/p_2}\big\|_{p_2^{\prime},(\Btd)} < \infty.
	\end{align*}
	Moreover, the best constant $C$ in \eqref{eq.main} satisfies $C\approx A_1$.
	
	{\rm (ii)} $q<p_2 \le r_1$, $1 / r_2 = 1 / q - 1 / p_2$, and
	\begin{align*}
	A_2 := & \sup_{x \in (0,\infty)} \bigg(\int_0^{\infty}{\mathcal U}(x,t)^{{r_1}/{q}}\,  U(t)^{r_1 / q} d \, \bigg[ - \big\| v_1^{- 1 / p_1} \big\|_{p_1^{\prime},\Btd}^{r_1} \bigg] \bigg)^{{1}/{r_1}} \\
	& \times \bigg(\int_0^{\infty}{\mathcal U}(t,x)^{{r_2} / q}
	\, d \bigg( -\big\| v^{-{1}/{p_2}} \big\|_{p_2^{\prime},(t-,\infty)}^{r_2} \bigg) \bigg)^{{1} / {r_2}} < \infty.
	\end{align*}
	Moreover, the best constant $C$ in \eqref{eq.main} satisfies $C\approx 	A_2$.
	
	{\rm (iii)} $r_1 < p_2 < \infty$, $1 / r_2 = 1 / q - 1 / p_2$, $1 / l = 1 / r_1 - 1 / p_2$, and
	\begin{align*}
	A_3 := & \bigg(\int_0^{\infty}\bigg(\int_0^{\infty}{\mathcal U}(x,t)^{{r_1}/{q}}\,  U(t)^{r_1 / q} d \, \bigg[ - \big\| v_1^{- 1 / p_1} \big\|_{p_1^{\prime},\Btd}^{r_1} \bigg] \bigg)^{(l-r_1) / {r_1}} \\
	& \times U(x)^{r_1 / q} \, \bigg( \int_0^{\infty} {\mathcal U}(t,x)^{{r_2} / q}
	\, d \bigg( -\big\| v^{-{1}/{p_2}} \big\|_{p_2^{\prime},(t-,\infty)}^{r_2} \bigg)
	\bigg)^{{l} / {r_2}} \, d \, \bigg[ - \big\| v_1^{- 1 / p_1} \big\|_{p_1^{\prime},\Bxd}^{r_1} \bigg] \bigg)^{{1} / {l}} < \infty.
	\end{align*}
	Moreover, the best constant $C$ in \eqref{eq.main} satisfies $C \approx A_3$.
	
	{\rm (iv)} $p_2 = \infty$, and
	\begin{align*}
	A_4 := & \left(\int_0^\infty\left( \int_0^\infty {\mathcal U(t,x)}\, d\bigg(-\big\|v_2^{-1}\big\|_{1,S[t,\infty)}^q\bigg)\right)^{r_1 / q}  U(x)^{r_1 / q} d \, \bigg[ - \big\| v_1^{- 1 / p_1} \big\|_{p_1^{\prime},\Bxd}^{r_1} \bigg] \right)^{{1} / {r_1}} < \infty.
	\end{align*}
	Moreover, the best constant $C$ in \eqref{eq.main} satisfies $C \approx A_4$.
\end{thm}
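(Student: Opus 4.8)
The plan is to reduce inequality \eqref{eq.main}, in the range $0<q<p_1$, to the multidimensional iterated Hardy-type inequality \eqref{eq.4.1}, which is completely characterized by Theorem~\ref{Thm.4.1}, and then to read off the four cases below from the five cases of that theorem. As in the proof of Theorem~\ref{thm.main.0}, I would first interchange the suprema and write the best constant in \eqref{eq.main} as
\[
\sup_{g\in\mp^+(\rn)}\frac{1}{\|g\|_{p_2,v_2,\rn}}\,\sup_{f\in\mp^+(\rn)}\frac{\Big\|\big(\int_{\dual B(0,\cdot)}g\big)\,\int_{\dual B(0,\cdot)}f\Big\|_{q,u,\I}}{\|f\|_{p_1,v_1,\rn}}.
\]
The inner supremum is the norm of the $n$-dimensional conjugate Hardy operator $f\mapsto\int_{\dual B(0,\cdot)}f$ from $L^{p_1}(v_1,\rn)$ into $L^{q}(w,\I)$, where $w(t)=\big(\int_{\dual B(0,t)}g\big)^{q}u(t)$.

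Since $0<q<p_1$, \cite[Theorem~2.2, (b) and (g)]{mu.emb}, followed by an integration by parts whose boundary term at infinity vanishes because $\lim_{t\to\infty}\|v_1^{-1/p_1}\|_{p_1',\Btd}=0$, shows that this inner supremum is equivalent to
\[
\left(\int_0^{\infty}\left(\int_0^x\Big(\int_{\dual B(0,t)}g\Big)^{q}u(t)\,dt\right)^{r_1/q}\!d\,\Big[-\big\|v_1^{-1/p_1}\big\|_{p_1',\Bxd}^{r_1}\Big]\right)^{1/r_1}.
\]
Consequently, \eqref{eq.main} holds for all $f,g\in\mp^+(\rn)$ if and only if
\[
\left(\int_{[0,\infty)}\left(\frac{1}{U(x)}\int_0^x\Big(\int_{\dual B(0,t)}g\Big)^{q}u(t)\,dt\right)^{r_1/q}\!d\nu(x)\right)^{1/r_1}\le C\,\|g\|_{p_2,v_2,\rn},\qquad g\in\mp^+(\rn),
\]
where $d\nu(x)=U(x)^{r_1/q}\,d\big[-\|v_1^{-1/p_1}\|_{p_1',\Bxd}^{r_1}\big]$, and the best constant here is equivalent to that of \eqref{eq.main}.

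This last inequality is exactly \eqref{eq.4.1} under the substitutions $h\leftrightarrow g$, $p\leftrightarrow q$, $q\leftrightarrow r_1$, $\t\leftrightarrow p_2$, $v\leftrightarrow v_2$, $\mu\leftrightarrow\nu$. Since $\mathcal U(x,t)^{r_1/q}U(t)^{r_1/q}=\big(U(x)U(t)/(U(x)+U(t))\big)^{r_1/q}$, the fundamental function of $\nu$ with respect to $U^{r_1/q}$ is precisely the function $\vp$ of the statement, so the standing hypothesis $\vp\in\Omega_{U^{r_1/q}}$ supplies the non-degeneracy needed by Theorem~\ref{Thm.4.1}. Applying that theorem (with $V_{\t}$ there becoming $V_{p_2}(t)=\|v_2^{-1/p_2}\|_{p_2',\Btd}$, resp.\ $\|v_2^{-1}\|_{1,\Btd}$ if $p_2=\infty$, and $1/\rho=(1/r_1-1/p_2)_+$) completes the proof: because $r_1>q$, its case~(ii) ($q<\t<p$, i.e.\ $r_1<p_2<q$) is vacuous, while case~(i) ($p_2\le q$) gives (i) with $I_1\ap A_1$, case~(iii) ($q<p_2\le r_1$) gives (ii) with $I_3\ap A_2$, case~(iv) ($r_1<p_2<\infty$) gives (iii) with $I_4\ap A_3$ (using $1/l=1/r_1-1/p_2$, hence $l/p_2=(l-r_1)/r_1$), and case~(v) ($p_2=\infty$) gives (iv) with $I_5\ap A_4$. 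In each case the identity $\int_0^{\infty}\mathcal U(x,t)^{r_1/q}\,d\nu(t)=\vp(x)$ together with the definition of $\nu$ rewrites $I_j$ as $A_j$.

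The main obstacle is the second step: quoting the correct part of \cite{mu.emb} for the multidimensional conjugate Hardy operator in the regime $q<p_1$, carrying out the integration by parts rigorously (the boundary term at infinity being controlled via $\lim_{t\to\infty}\|v_1^{-1/p_1}\|_{p_1',\Btd}=0$), and verifying that the measure $\nu$ it produces is non-degenerate with respect to $U^{r_1/q}$ --- which is precisely the hypothesis $\vp\in\Omega_{U^{r_1/q}}$. Once the reduction to \eqref{eq.4.1} is in place, the rest is the routine bookkeeping of substituting parameters into Theorem~\ref{Thm.4.1} and simplifying the resulting expressions.
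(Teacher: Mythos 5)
Your proof follows essentially the same route as the paper: interchange the suprema, apply \cite[Theorem 2.2, (b) and (g)]{mu.emb} to the inner supremum, integrate by parts using $\lim_{t\to\infty}\|v_1^{-1/p_1}\|_{p_1',\Btd}=0$ to eliminate the boundary term, and identify the result with inequality \eqref{eq.4.1} under the substitutions $p\mapsto q$, $q\mapsto r_1$, $\t\mapsto p_2$, $\mu\mapsto\nu$, $v\mapsto v_2$, then read off the four cases from Theorem \ref{Thm.4.1}. Your mapping of cases is actually more careful than the paper's: since $p_{\mathrm{thm}}=q<r_1=q_{\mathrm{thm}}$, Theorem \ref{Thm.4.1}(ii) is indeed vacuous and statement (ii) comes from Theorem \ref{Thm.4.1}(iii) (the paper's reference to ``Theorem \ref{Thm.4.1}, (ii)'' is a misprint), and you also implicitly correct the evident typo $r_1/p_1'$ in the displayed measure $d\nu$, which must be $r_1$ for consistency with $\vp$ and the integration by parts.
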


\begin{proof}
Assume that $\max\{1,q\} < p_1$ and $1 / r_1 = 1 / q - 1 / p_1$. By \cite[Theorem 2.2, (b) and (g)]{mu.emb}, \eqref{eq.iteration} yields that
\begin{align*}
\sup_{f,g \in \mp^+(\rn)} \frac{\bigg( \int_0^{\infty} \bigg( \int_{\Btd} f \cdot \int_{\Btd} g \bigg)^q u(t)\,dt \bigg)^{1 / q}}{\bigg(\int_{\rn} f^{p_1}  v_1 \bigg)^{1 / p_1} \, \bigg(\int_{\rn} g^{p_2}  v_2 \bigg)^{1 / p_2}} & \\
& \hspace{-5cm} = \, \sup_{g \in \mp^+ (\rn)} \frac{\bigg( \int_0^{\infty} \bigg( \int_0^x \bigg( \int_{\Btd} g \bigg)^q u(t) \,dt \bigg)^{r_1 / p_1} \, \bigg( \int_{\Bxd} g \bigg)^q u(x) \,  \big\| v_1^{- 1 / p_1} \big\|_{p_1^{\prime},\Bxd}^{r_1} \, dx \bigg)^{1 / r_1}}{\bigg(\int_{\rn} g^{p_2}  v_2 \bigg)^{1 / p_2}}.
\end{align*}
Integrating by parts, in view of $\lim_{t\rightarrow \infty} \|v_1^{-{1} / {p_1}} \|_{p_1',\Btd} = 0$, we have that
\begin{align*}
\bigg( \int_0^{\infty} \bigg( \int_0^x \bigg( \int_{\Btd} g \bigg)^q u(t) \,dt \bigg)^{r_1 / p_1} \, \bigg( \int_{\Bxd} g \bigg)^q u(x) \,  \big\| v_1^{- 1 / p_1} \big\|_{p_1^{\prime},\Bxd}^{r_1} \, dx \bigg)^{1 / r_1} & \\
& \hspace{-7cm} \approx \bigg( \int_0^{\infty}  \big\| v_1^{- 1 / p_1} \big\|_{p_1^{\prime},\Bxd}^{r_1} \, d \, \bigg( \int_0^x \bigg( \int_{\Btd} g \bigg)^q u(t) \,dt \bigg)^{r_1 / q} \bigg)^{1 / r_1} \\
& \hspace{-7cm} \approx \bigg( \int_0^{\infty}  \bigg( \int_0^x \bigg( \int_{\Btd} g \bigg)^q u(t) \,dt \bigg)^{r_1 / q} \, d \, \bigg[ - \big\| v_1^{- 1 / p_1} \big\|_{p_1^{\prime},\Bxd}^{r_1} \bigg] \bigg)^{1 / r_1}.
\end{align*}
Thus
\begin{align*}
\sup_{f,g \in \mp^+(\rn)} \frac{\bigg( \int_0^{\infty} \bigg( \int_{\Btd} f \cdot \int_{\Btd} g \bigg)^q u(t)\,dt \bigg)^{1 / q}}{\bigg(\int_{\rn} f^{p_1}  v_1 \bigg)^{1 / p_1} \, \bigg(\int_{\rn} g^{p_2}  v_2 \bigg)^{1 / p_2}} & \\
& \hspace{-5cm} = \,\sup_{g \in \mp^+(0,\infty)} \frac{\bigg( \int_0^{\infty}  \bigg( \int_0^x \bigg( \int_{\Btd} g \bigg)^q u(t) \,dt \bigg)^{r_1 / q} \, d \, \bigg[ - \big\| v_1^{- 1 / p_1} \big\|_{p_1^{\prime},\Bxd}^{r_1} \bigg] \bigg)^{1 / r_1}}{\bigg(\int_{\rn} g^{p_2}  v_2 \bigg)^{1 / p_2}} \\
& \hspace{-5cm} = \,\sup_{g \in \mp^+(0,\infty)} \frac{\bigg( \int_0^{\infty} \bigg( \frac{1}{U(x)}\int_0^x  \bigg( \int_{\Btd} g \bigg)^q u(t)\,dt \bigg)^{r_1 / q} \,  U(x)^{r_1 / q} d \, \bigg[ - \big\| v_1^{- 1 / p_1} \big\|_{p_1^{\prime},\Bxd}^{r_1} \bigg] \bigg)^{1/r_1}}{\bigg(\int_{\rn} g^{p_2}  v_2 \bigg)^{1 / p_2}}.  
\end{align*}

\begin{itemize}
\item[(i)] The statement follows by Theorem \ref{Thm.4.1}, (i). 

\item[(ii)] The statement follows by Theorem \ref{Thm.4.1}, (ii).

\item[(iii)] The statement follows by Theorem \ref{Thm.4.1}, (iv).

\item[(iv)] The statement follows by Theorem \ref{Thm.4.1}, (v).
\end{itemize}

\end{proof}

In the limiting case when $p_1 = \infty$ we obtain the following statement.
\begin{thm}\label{thm.main.00}
	Let $1 \le p_2 \le \infty$, $0 < q  < \infty$,  and let $u\in {\mathcal W}(0,\infty)$, $v_1,\,v_2 \in {\mathcal W}(\rn)$. Then inequality 
	\begin{equation}\label{eq.4.10.inf}
	\bigg( \int_0^{\infty} \bigg( \int_{\Btd} f \cdot \int_{\Btd} g \bigg)^q u(t)\,dt \bigg)^{1 / q} \leq C \, \|f\|_{\infty,v_1,\rn} \, \|g\|_{p_2,v_2,\rn}
	\end{equation}
	holds for all $f,\,g \in \mp^+ (\rn)$ if and only if:
	
	{\rm (i)} $p_2 \leq q$, and
	\begin{align*}
	D_1 := & \sup\limits_{t \in (0,\infty)} \bigg(\int_0^{\infty} u(y) \big\|v_1^{-1} \big\|_{1,\Byd}^q \, dy\bigg)^{{1} / {q}} \big\|v_2^{-{1} / {p_2}}\big\|_{p_2',\Btd} < \infty.
	\end{align*}
	Moreover, the best constant $C$ in \eqref{eq.4.10.inf} satisfies $C \approx D_1$.	
	
	{\rm (ii)} $q < p_2 < \infty$, $1 / r_2 = 1 / q - 1 / p_2$, and
	\begin{align*}
	D_2 := & \bigg( \int_0^{\infty} \bigg(\int_0^t u(\tau) \big\|v_1^{-1} \big\|_{1,{\dual B(0,\tau)}}^q \, d\tau \bigg)^{{r_2} / {q}} \, u(t) \, \big\|v_1^{-1} \, \big\|_{1,\Btd}^q \big\|v_2^{-{1} / {p_2}}\big\|_{p_2',\Btd}^{r_2} \,dt\bigg)^{1/r_2} < \infty.
	\end{align*}
	Moreover, the best constant $C$ in \eqref{eq.4.10.inf} satisfies $C \approx D_2$.
	
	{\rm (iii)} $p_2 = \infty$, and
	\begin{align*}
	D_3 := & \bigg( \int_0^{\infty}  u(t) \, \big\|v_1^{-1} \, \big\|_{1,\Btd}^q \big\|v_2^{-1}\big\|_{1,\Btd}^q \,dt\bigg)^{1 / q} < \infty.
	\end{align*}
	Moreover, the best constant $C$ in \eqref{eq.4.10.inf} satisfies $C \approx D_3$.
\end{thm}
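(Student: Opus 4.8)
The plan is to follow the scheme of the proof of Theorem~\ref{thm.main.0}: interchange the two suprema, reduce \eqref{eq.4.10.inf} to an ordinary (non-iterated) multidimensional weighted Hardy-type inequality for the dual operator $g\mapsto\int_{\dual B(0,\cdot)}g$, and then invoke the known characterizations from \cite{mu.emb}.

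First I would write, exactly as in \eqref{eq.iteration},
\begin{align*}
\sup_{f,g \in \mp^+(\rn)} \frac{\bigg( \int_0^{\infty} \bigg( \int_{\Btd} f \cdot \int_{\Btd} g \bigg)^q u(t)\,dt \bigg)^{1 / q}}{\|f\|_{\infty,v_1,\rn} \, \|g\|_{p_2,v_2,\rn}} & \\
& \hspace{-6cm} = \sup_{g \in \mp^+ (\rn)} \frac{1}{\|g\|_{p_2,v_2,\rn}} \, \sup_{f \in \mp^+(\rn)} \frac{\bigg\| \int_{\Btd} f \cdot \int_{\Btd} g \bigg\|_{q,u,\I}}{\|f\|_{\infty,v_1,\rn}}.
\end{align*}
Since $\|f\|_{\infty,v_1,\rn}\le 1$ forces $f\le v_1^{-1}$ a.e.\ on $\rn$, while the single function $f=v_1^{-1}$ realizes $\int_{\Btd}f=\|v_1^{-1}\|_{1,\Btd}$ simultaneously for \emph{every} $t\in\I$, the inner supremum is attained at $f=v_1^{-1}$; the factor $\int_{\Btd}g$ does not interact with $f$, so this causes no difficulty with the remaining supremum over $g$. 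Consequently \eqref{eq.4.10.inf} is equivalent to
\begin{align*}
\bigg\| \int_{\Btd} g \bigg\|_{q,w,\I} \le C \, \|g\|_{p_2,v_2,\rn}, \qquad g \in \mp^+(\rn),
\end{align*}
with the weight $w(t):=\|v_1^{-1}\|_{1,\Btd}^{q}\,u(t)$ on $\I$, and with comparable least constants.

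It then remains to characterize this dual Hardy inequality, which I would do by applying \cite[Theorem~2.2]{mu.emb} to the weight $w$: parts (a) and (f) when $p_2\le q$, parts (b) and (g) when $q<p_2<\infty$ (setting $1/r_2=1/q-1/p_2$), and part (e) when $p_2=\infty$. Substituting $w(t)=\|v_1^{-1}\|_{1,\Btd}^{q}u(t)$ back in and rewriting the outcome in closed form — using \eqref{Fubini.1} and the monotonicity of $t\mapsto\|v_1^{-1}\|_{1,\Btd}$ and of $t\mapsto\|v_2^{-1/p_2}\|_{p_2',\Btd}$ where convenient, exactly as in the proof of Theorem~\ref{thm.main.0} — one arrives at the conditions $D_1$, $D_2$, $D_3$ of parts (i), (ii), (iii), respectively, together with the asserted two-sided estimates for the best constant $C$.

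I do not expect any serious obstacle here: all of the analytic work has already been carried out in Sections~\ref{mr}--\ref{Antidiscretization} and in \cite{mu.emb}, and the limiting case $p_1=\infty$ is in fact simpler than Theorem~\ref{main.thm}, since after the reduction one faces a plain Hardy inequality rather than an iterated one — so no non-degeneracy hypothesis and no condition of the form $\lim_{t\to\infty}\|v_1^{-1/p_1}\|_{p_1',\Btd}=0$ are needed. The only points that call for a little care are the justification that the extremal choice $f=v_1^{-1}$ may be inserted inside the $L^q(u,\I)$ norm uniformly in $t$, and the routine bookkeeping required to bring each case of \cite[Theorem~2.2]{mu.emb} into the stated closed forms $D_1$--$D_3$.
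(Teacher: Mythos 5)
Your proposal is correct and follows essentially the same route as the paper: interchange the suprema as in \eqref{eq.iteration}, reduce \eqref{eq.4.10.inf} to the non-iterated dual Hardy inequality with weight $w(t)=u(t)\big\|v_1^{-1}\big\|_{1,\Btd}^q$, and apply \cite[Theorem 2.2]{mu.emb}; the only (harmless) difference is that you compute the inner supremum over $f$ directly via the extremal choice $f=v_1^{-1}$, where the paper instead cites \cite[Theorem 2.2, (e)]{mu.emb}. Note that your reduction produces the condition with $\int_0^t u(y)\big\|v_1^{-1}\big\|_{1,\Byd}^q\,dy$ in case (i), which is the intended reading of $D_1$ (the upper limit $\infty$ in the printed statement is a misprint).
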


\begin{proof}
	By \cite[Theorem 2.2, (e)]{mu.emb}, \eqref{eq.iteration} yields that
	\begin{align*}
	\sup_{f,g \in \mp^+(\rn)} \frac{\bigg( \int_0^{\infty} \bigg( \int_{\Btd} f \cdot \int_{\Btd} g \bigg)^q u(t)\,dt \bigg)^{1 / q}}{\|f\|_{\infty,v_1,\rn} \, \|g\|_{p_2,v_2,\rn}} = \, \sup_{g \in \mp^+ (\rn)} \frac{\bigg( \int_0^{\infty} \bigg( \int_{\Btd} g \bigg)^q u(t) \big\| v_1^{-1}\|_{1,\Btd}^q \,dt \bigg)^{1 / q} }{\|g\|_{p_2,v_2,\rn}}.
	\end{align*}
	The proof follows by application of \cite[Theorem 2.2]{mu.emb}.	
\end{proof}

We have the following statement when $q = \infty$.
\begin{thm}\label{thm.main.000}
	Let $1 \le p_1,\,p_2 \le \infty$,  and let $u\in {\mathcal W}(0,\infty)$, $v_1,\,v_2 \in {\mathcal W}(\rn)$. Then inequality 
	\begin{equation}\label{eq.4.10.sup}
	\esup_{t \in \I} \bigg( \int_{\Btd} f \cdot \int_{\Btd} g \bigg) u(t) \leq C \, \|f\|_{p_1,v_1,\rn} \, \|g\|_{p_2,v_2,\rn}
	\end{equation}
	holds for all $f,\,g \in \mp^+ (\rn)$ if and only if:
	
	{\rm (a)} $p_1,\,p_2 < \infty$, and
	\begin{align*}
	E_1 := & \sup_{t \in \I} u(t) \big\| v_1^{- 1 / p_1} \big\|_{p_1^{\prime},\Btd} \big\|v_2^{-{1} / {p_2}}\big\|_{p_2',\Btd} < \infty.
	\end{align*}
	Moreover, the best constant $C$ in \eqref{eq.4.10.sup} satisfies $C \approx E_1$.	

	{\rm (b)} $p_1 < \infty$, $p_2 = \infty$, and
    \begin{align*}
    E_2 := & \sup_{t \in \I} u(t) \big\| v_1^{- 1 / p_1} \big\|_{p_1^{\prime},\Btd} \big\|v_2^{-1}\big\|_{1,\Btd} < \infty.
    \end{align*}
    Moreover, the best constant $C$ in \eqref{eq.4.10.sup} satisfies $C \approx E_2$.	
	
	{\rm (c)} $p_1 = p_2 = \infty$, and
	\begin{align*}
	E_3 := &  \sup_{t \in \I} u(t) \big\| v_1^{- 1} \big\|_{1,\Btd} \big\|v_2^{-1}\big\|_{1,\Btd} < \infty.
	\end{align*}
	Moreover, the best constant $C$ in \eqref{eq.4.10.sup} satisfies $C \approx E_3$.
\end{thm}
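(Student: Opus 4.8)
The plan is to follow the scheme of the proof of Theorem~\ref{thm.main.0}. Since $q=\infty$, the left-hand side of \eqref{eq.4.10.sup} is an essential supremum that factors completely across $f$ and $g$, so no iterated Hardy inequality and no (anti-)discretization are needed: the whole statement will come out of two applications of H\"older's inequality --- equivalently, of the $q=\infty$ instance of \cite[Theorem 2.2]{mu.emb} applied first in $f$ and then in $g$, exactly as in the proof of Theorem~\ref{thm.main.0}. To handle the cases {\rm (a)}, {\rm (b)}, {\rm (c)} uniformly, I would write $\Phi_i(t):=\big\|v_i^{-1/p_i}\big\|_{p_i',\Btd}$ when $p_i<\infty$ and $\Phi_i(t):=\big\|v_i^{-1}\big\|_{1,\Btd}$ when $p_i=\infty$ ($i=1,2$); with this convention --- the one already used in the definition of $V_\theta$ --- each of $E_1,E_2,E_3$ equals $\sup_{t\in\I}u(t)\Phi_1(t)\Phi_2(t)$, and each $\Phi_i$ is non-increasing on $\I$. (The combination $p_1=\infty$, $p_2<\infty$ omitted from the statement reduces to {\rm (b)} by the symmetry $f\leftrightarrow g$.)

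\textbf{Sufficiency.} First I would fix $f,g\in\mp^+(\rn)$ and $t\in\I$ and apply H\"older's inequality pointwise in $t$:
\[
\int_{\Btd}f=\int_{\rn}\big(fv_1^{1/p_1}\big)\big(v_1^{-1/p_1}\chi_{\Btd}\big)\le\|f\|_{p_1,v_1,\rn}\,\Phi_1(t),\qquad \int_{\Btd}g\le\|g\|_{p_2,v_2,\rn}\,\Phi_2(t),
\]
with the obvious modification $\int_{\Btd}h\le\|h\|_{\infty,v_i,\rn}\big\|v_i^{-1}\big\|_{1,\Btd}$ in an endpoint case $p_i=\infty$. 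Multiplying these bounds, multiplying by $u(t)$ and taking the essential supremum over $t\in\I$ then yields $\lhs\eqref{eq.4.10.sup}\le E_i\,\|f\|_{p_1,v_1,\rn}\|g\|_{p_2,v_2,\rn}$, i.e. $C\le E_i$ in each of the three cases.

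\textbf{Necessity.} Conversely, I would assume \eqref{eq.4.10.sup} holds with constant $C$, fix $t_0\in\I$ and $\varepsilon\in(0,1)$, and use the sharpness of H\"older's inequality on $\dual B(0,t_0)$ to choose $f_0,g_0\in\mp^+(\rn)$ with $\supp f_0,\supp g_0\subseteq\dual B(0,t_0)$, $\|f_0\|_{p_1,v_1,\rn}=\|g_0\|_{p_2,v_2,\rn}=1$, and $\int_{\dual B(0,t_0)}f_0\ge(1-\varepsilon)\Phi_1(t_0)$, $\int_{\dual B(0,t_0)}g_0\ge(1-\varepsilon)\Phi_2(t_0)$ (essentially $f_0\propto v_1^{-p_1'/p_1}\chi_{\dual B(0,t_0)}$, with the routine endpoint adjustments when $p_1\in\{1,\infty\}$, and likewise for $g_0$). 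Because $\dual B(0,t_0)\subseteq\dual B(0,t)$ for every $t\in(0,t_0)$, the support conditions force $\int_{\dual B(0,t)}f_0=\int_{\dual B(0,t_0)}f_0$ and $\int_{\dual B(0,t)}g_0=\int_{\dual B(0,t_0)}g_0$ throughout $(0,t_0)$. Plugging $f_0,g_0$ into \eqref{eq.4.10.sup} and discarding all $t\ge t_0$ gives
\[
C\ge(1-\varepsilon)^2\,\Phi_1(t_0)\Phi_2(t_0)\,\esup_{t\in(0,t_0)}u(t).
\]
Letting $\varepsilon\to0^+$, taking the supremum over $t_0\in\I$, and invoking \eqref{Fubini.1} with $F=\Phi_1\Phi_2$ (non-increasing) and $G=u$, one recovers $\esup_{t\in\I}u(t)\Phi_1(t)\Phi_2(t)\le C$, i.e. $E_i\le C$. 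Combined with the sufficiency half this gives $C\approx E_i$.

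Apart from routine calculations, the only delicate points --- and the step I would expect to be the (minor) obstacle --- are bookkeeping ones: reconciling the $p_i=\infty$ conventions with the definition of $V_\theta$, and noting that, since the dual norms $\Phi_i$ are non-increasing, the essential-supremum form of $E_i$ is precisely what the test-function argument produces via \eqref{Fubini.1}. There is no genuine analytic difficulty here, in sharp contrast with the range $q<p_1$ treated in Theorem~\ref{main.thm}.
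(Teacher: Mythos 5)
Your proof is correct and in essence the same as the paper's: both rely on the full factorization of the $L^\infty$-norm, sharpness of H\"older's inequality, and the non-increasing-supremum identity \eqref{Fubini.1}. The only difference is presentational: you carry out the two H\"older/duality steps directly with near-extremal test functions, whereas the paper packages each of them as an application of \cite[Theorem~2.2]{mu.emb} inside the iterated-supremum identity \eqref{eq.iteration}.
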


\begin{proof}
(a) and (b): Let $p_1 < \infty$. By \cite[Theorem 2.2, (c) and (h)]{mu.emb}, \eqref{eq.iteration} yields that
\begin{align*}
\sup_{f,g \in \mp^+(\rn)} \frac{ \esup_{t \in \I} \bigg( \int_{\Btd} f \cdot \int_{\Btd} g \bigg) u(t) }{\|f\|_{p_1,v_1,\rn} \, \|g\|_{p_2,v_2,\rn}} & \\
& \hspace{-5cm} = \sup_{g \in \mp^+ (\rn)} \frac{1}{\|g\|_{p_2,v_2,\rn}} \sup_{t \in \I} \bigg( \esup_{\tau \in (0,t)} \bigg( \int_{\dual B(0,\tau)} g \bigg) u(\tau) \bigg) \,\big\| v_1^{- 1 / p_1} \big\|_{p_1^{\prime},\Btd}.
\end{align*}
Interchanging the suprema, by duality, on using \eqref{Fubini.1}, we arrive at
\begin{align*}
\sup_{f,g \in \mp^+(\rn)} \frac{ \esup_{t \in \I} \bigg( \int_{\Btd} f \cdot \int_{\Btd} g \bigg) u(t) }{\|f\|_{p_1,v_1,\rn} \, \|g\|_{p_2,v_2,\rn}} & \\
& \hspace{-5cm} = \sup_{t \in \I} \big\| v_1^{- 1 / p_1} \big\|_{p_1^{\prime},\Btd} \bigg( \esup_{\tau \in (0,t)}  u(\tau) \,\bigg( \sup_{g \in \mp^+ (\rn)} \frac{\int_{\dual B(0,\tau)} g}{\|g\|_{p_2,v_2,\rn}} \bigg) \bigg) \\
& \hspace{-5cm} = \sup_{t \in \I} \big\| v_1^{- 1 / p_1} \big\|_{p_1^{\prime},\Btd} \bigg( \esup_{\tau \in (0,t)}  u(\tau) \big\|v_2^{-{1} / {p_2}}\big\|_{p_2',\dual B(0,\tau)} \bigg) \\
& \hspace{-5cm} = \sup_{t \in \I} u(t) \big\| v_1^{- 1 / p_1} \big\|_{p_1^{\prime},\Btd} \big\|v_2^{-{1} / {p_2}}\big\|_{p_2',\Btd}, 
\end{align*}
when $p_2 < \infty$, and
\begin{align*}
\sup_{f,g \in \mp^+(\rn)} \frac{ \esup_{t \in \I} \bigg( \int_{\Btd} f \cdot \int_{\Btd} g \bigg) u(t) }{\|f\|_{p_1,v_1,\rn} \, \|g\|_{\infty,v_2,\rn}} & \\
& \hspace{-5cm} = \sup_{t \in \I} u(t) \big\| v_1^{- 1 / p_1} \big\|_{p_1^{\prime},\Btd} \big\|v_2^{-1}\big\|_{1,\Btd}, 
\end{align*}
when $p_2 = \infty$.

(c) Let $p_1 = p_2 = \infty$. By \cite[Theorem 2.2, (d)]{mu.emb}, \eqref{eq.iteration} yields that	
\begin{align*}
\sup_{f,g \in \mp^+(\rn)} \frac{ \esup_{t \in \I} \bigg( \int_{\Btd} f \cdot \int_{\Btd} g \bigg) u(t) }{\|f\|_{\infty,v_1,\rn} \, \|g\|_{\infty,v_2,\rn}} & \\
& \hspace{-5cm} = \sup_{g \in \mp^+ (\rn)} \frac{1}{\|g\|_{\infty,v_2,\rn}} \sup_{t \in \I} \bigg( \esup_{\tau \in (0,t)} \bigg( \int_{\dual B(0,\tau)} g \bigg) u(\tau) \bigg) \,\big\| v_1^{- 1} \big\|_{1,\Btd}.
\end{align*}
Interchanging the suprema, by duality, on using \eqref{Fubini.1}, we arrive at
\begin{align*}
\sup_{f,g \in \mp^+(\rn)} \frac{ \esup_{t \in \I} \bigg( \int_{\Btd} f \cdot \int_{\Btd} g \bigg) u(t) }{\|f\|_{\infty,v_1,\rn} \, \|g\|_{\infty,v_2,\rn}} & \\
& \hspace{-5cm} = \sup_{t \in \I} \big\| v_1^{- 1} \big\|_{1,\Btd} \bigg( \esup_{\tau \in (0,t)}  u(\tau) \,\bigg( \sup_{g \in \mp^+ (\rn)} \frac{\int_{\dual B(0,\tau)} g}{\|g\|_{\infty,v_2,\rn}} \bigg) \bigg) \\
& \hspace{-5cm} = \sup_{t \in \I} \big\| v_1^{- 1} \big\|_{1,\Btd} \bigg( \esup_{\tau \in (0,t)}  u(\tau) \big\|v_2^{-1}\big\|_{1,\dual B(0,\tau)} \bigg) \\
& \hspace{-5cm} = \sup_{t \in \I} u(t) \big\| v_1^{- 1} \big\|_{1,\Btd} \big\|v_2^{-1}\big\|_{1,\Btd}. 
\end{align*}

\end{proof}



\begin{bibdiv}
	\begin{biblist}

	    \bib{agu.or.ra.2012}{article}{
	    	author={Aguilar Ca\~nestro, M. I.},
	    	author={Ortega Salvador, P.},
	    	author={Ram\'\i rez Torreblanca, C.},
	    	title={Weighted bilinear Hardy inequalities},
	    	journal={J. Math. Anal. Appl.},
	    	volume={387},
	    	date={2012},
	    	number={1},
	    	pages={320--334},
	    	issn={0022-247X},
	    	review={\MR{2845753}},
	    }
		
        
        
        \bib{ChristGraf}{article}{
        	author={Christ, M.},
        	author={Grafakos, L.},
        	title={Best constants for two nonconvolution inequalities},
        	journal={Proc. Amer. Math. Soc.},
        	volume={123},
        	date={1995},
        	number={6},
        	pages={1687--1693},
        	issn={0002-9939},
        	review={\MR{1239796 (95g:42031)}},
        }
                
        \bib{cwiker}{article}{
        	author={Cwikel, M.},
        	author={Kerman, R.},
        	title={Positive multilinear operators acting on weighted $L^p$ spaces},
        	journal={J. Funct. Anal.},
        	volume={106},
        	date={1992},
        	number={1},
        	pages={130--144},
        	issn={0022-1236},
        	review={\MR{1163467}},
        }
        
        \bib{DrabHeinKuf}{article}{
        	author={Dr{\'a}bek, P.},
        	author={Heinig, H.P.},
        	author={Kufner, A.},
        	title={Higher-dimensional Hardy inequality},
        	conference={
        		title={General inequalities, 7},
        		address={Oberwolfach},
        		date={1995},
        	},
        	book={
        		series={Internat. Ser. Numer. Math.},
        		volume={123},
        		publisher={Birkh\"auser},
        		place={Basel},
        	},
        	date={1997},
        	pages={3--16},
        	review={\MR{1457264 (98k:26026)}},
        }

        \bib{gop2009}{article}{
        	author={Evans, W. D.},
        	author={Gogatishvili, A.},
        	author={Opic, B.},
        	title={The $\rho$-quasiconcave functions and weighted inequalities},
        	conference={
        		title={Inequalities and applications},
        	},
        	book={
        		series={Internat. Ser. Numer. Math.},
        		volume={157},
        		publisher={Birkh\"auser},
        		place={Basel},
        	},
        	date={2009},
        	pages={121--132},
        	review={\MR{2758974 (2012a:26025)}},
        }

        \bib{g1}{article}{
        	author={Gogatishvili, A.},
        	title={Discretization and anti-discretization of function spaces},
        	series={},
        	edition={},
        	journal={In the proceedings of the The Autumn Conference Mathematical Society
        		of Japan, September 25--28, Shimane University, Matsue (2002)},
        	pages={63--72},
        }

        

        \bib{gp1}{article}{
        	author={Gogatishvili, A.},
        	author={Pick, L.},
        	title={Discretization and anti-discretization of rearrangement-invariant
        		norms},
        	journal={Publ. Mat.},
        	volume={47},
        	date={2003},
        	number={2},
        	pages={311--358},
        	issn={0214-1493},
        	review={\MR{2006487}},
        }
        
        \bib{gp2}{article}{
        	author={Gogatishvili, A.},
        	author={Pick, L.},
        	title={Embeddings and duality theorems for weak classical Lorentz spaces},
        	journal={Canad. Math. Bull.},
        	volume={49},
        	date={2006},
        	number={1},
        	pages={82--95},
        	issn={0008-4395},
        	review={\MR{2198721}},
        	doi={10.4153/CMB-2006-008-3},
        }
        
        \bib{gop}{article}{
        	author={Gogatishvili, A.},
        	author={Opic, B.},
        	author={Pick, L.},
        	title={Weighted inequalities for Hardy-type operators involving suprema},
        	journal={Collect. Math.},
        	volume={57},
        	date={2006},
        	number={3},
        	pages={227--255},
        	issn={0010-0757},
        	review={\MR{2264321}},
        }
        
        \bib{gjop}{article}{
        	author={Gogatishvili, A.},
        	author={Johansson, M.},
        	author={Okpoti, C. A.},
        	author={Persson, L.-E.},
        	title={Characterisation of embeddings in Lorentz spaces},
        	journal={Bull. Austral. Math. Soc.},
        	volume={76},
        	date={2007},
        	number={1},
        	pages={69--92},
        	issn={0004-9727},
        	review={\MR{2343440}},
        	doi={10.1017/S0004972700039484},
        }
           
         
        \bib{gmp}{article}{
        	author={Gogatishvili, A.},
        	author={Mustafayev, R. Ch.},
        	author={Persson, L.-E.},
        	title={Some new iterated Hardy-type inequalities},
        	journal={J. Funct. Spaces Appl.},
        	date={2012},
        	pages={Art. ID 734194, 30},
        	issn={0972-6802},
        	review={\MR{3000818}},
        }
        
        \bib{gmp2013}{article}{
        	author={Gogatishvili, A.},
        	author={Mustafayev, R. Ch.},
        	author={Persson, L.-E.},
        	title={Some new iterated Hardy-type inequalities: the case $\theta=1$},
        	journal={J. Inequal. Appl.},
        	date={2013},
        	pages={2013:515, 29},
        	issn={1029-242X},
        	review={\MR{3320124}},
        }
        
        \bib{gog.mus.2017_1}{article}{
        	author={Gogatishvili, A.},
        	author={Mustafayev, R. Ch.},
        	title={Weighted iterated Hardy-type inequalities},
        	journal={Math. Inequal. Appl.},
        	volume={20},
        	date={2017},
        	number={3},
        	pages={683--728},
        	issn={1331-4343},
        	review={\MR{3653914}},
        }
        
        \bib{gog.mus.2017_2}{article}{
        	author={Gogatishvili, A.},
        	author={Mustafayev, R. Ch.},
        	title={Iterated Hardy-type inequalities involving suprema},
        	journal={Math. Inequal. Appl.},
        	volume={20},
        	date={2017},
        	number={4},
        	pages={901--927},
        	issn={},
        	review={},
        }
        
        \bib{graf.tor.2001}{article}{
        	author={Grafakos, L.},
        	author={Torres, R. H.},
        	title={A multilinear Schur test and multiplier operators},
        	journal={J. Funct. Anal.},
        	volume={187},
        	date={2001},
        	number={1},
        	pages={1--24},
        	issn={0022-1236},
        	review={\MR{1867338}},
        }
        
        \bib{Krep}{article}{
        	author={K\v repela, M.},
        	title={Iterating bilinear Hardy inequalities},
        	journal={Proc. Edinb. Math. Soc. (2)},
        	volume={60},
        	date={2017},
        	number={4},
        	pages={955--971},
        	issn={0013-0915},
        	review={\MR{3715696}},
        	doi={10.1017/S0013091516000602},
        }

        \bib{kp}{book}{
        	author={Kufner, A.},
        	author={Persson, L.-E.},
        	title={Weighted inequalities of Hardy type},
        	publisher={World Scientific Publishing Co., Inc., River Edge, NJ},
        	date={2003},
        	pages={xviii+357},
        	isbn={981-238-195-3},
        	review={\MR{1982932 (2004c:42034)}},
        	doi={10.1142/5129},
        }

		\bib{LOPTT}{article}{
			author={Lerner, A. K.},
			author={Ombrosi, S.},
			author={P\'erez, C.},
			author={Torres, R. H.},
			author={Trujillo-Gonz\'alez, R.},
			title={New maximal functions and multiple weights for the multilinear
				Calder\'on-Zygmund theory},
			journal={Adv. Math.},
			volume={220},
			date={2009},
			number={4},
			pages={1222--1264},
			issn={0001-8708},
			review={\MR{2483720}},
		}

        \bib{mus.2017}{article}{
        	author={Mustafayev, R. Ch.},
        	title={On weighted iterated Hardy-type inequalities},
        	journal={Positivity},
        	volume={22},
        	date={2018},
        	number={},
        	pages={275--299},
        	issn={},
        	review={},
        }

        \bib{mu.emb}{article}{
        	author={Mustafayev, R. Ch.},
        	author={\"Unver, T.},
        	title={Embeddings between weighted local Morrey-type spaces and weighted
        		Lebesgue spaces},
        	journal={J. Math. Inequal.},
        	volume={9},
        	date={2015},
        	number={1},
        	pages={277--296},
        	issn={1846-579X},
        	review={\MR{3333923}},
        }
        
        \bib{ok}{book}{
        	author={Opic, B.},
        	author={Kufner, A.},
        	title={Hardy-type inequalities},
        	series={Pitman Research Notes in Mathematics Series},
        	volume={219},
        	publisher={Longman Scientific \& Technical, Harlow},
        	date={1990},
        	pages={xii+333},
        	isbn={0-582-05198-3},
        	review={\MR{1069756 (92b:26028)}},
        }
        
        \bib{prok.step.2013}{article}{
        	author={Prokhorov, D. V.},
        	author={Stepanov, V. D.},
        	title={On weighted Hardy inequalities in mixed norms},
        	journal={Proc. Steklov Inst. Math.},
        	volume={283},
        	date={2013},
        	number={1},
        	pages={149--164},
        	issn={0081-5438},
        	review={\MR{3479954}},
        }
                 

		\bib{ss}{article}{
			author={Sinnamon, G.},
			author={Stepanov, V. D.},
			title={The weighted Hardy inequality: new proofs and the case $p=1$},
			journal={J. London Math. Soc. (2)},
			volume={54},
			date={1996},
			number={1},
			pages={89--101},
			issn={0024-6107},
			review={\MR{1395069 (97e:26021)}},
			doi={10.1112/jlms/54.1.89},
		}

	\end{biblist}
\end{bibdiv}

\end{document}